\definecolor{ANDREW}{RGB}{255,127,0}
\theoremstyle{plain}
\newtheorem{proposition}{Proposition}[section]
\newtheorem{theorem}[proposition]{Theorem}
\newtheorem{lemma}[proposition]{Lemma}
\newtheorem{corollary}[proposition]{Corollary}
\theoremstyle{definition}
\newtheorem{observation}[proposition]{Observation}
\theoremstyle{remark}
\newtheorem{remark}[proposition]{Remark}
\newtheorem*{question}{Question}
\DeclareMathOperator{\Aut}{Aut}
\DeclareMathOperator{\Id}{Id}
\DeclareMathOperator{\End}{End}
\DeclareMathOperator{\id}{id}
\DeclareMathOperator{\dev}{dev}
\DeclareMathOperator{\dist}{dist}
\DeclareMathOperator{\Cc}{\mathcal{C}}
\DeclareMathOperator{\Mcc}{\mathcal{M}}
\DeclareMathOperator{\Oc}{\mathcal{O}}
\DeclareMathOperator{\Pc}{\mathcal{P}}
\DeclareMathOperator{\Bb}{\mathbb{B}}
\DeclareMathOperator{\Cb}{\mathbb{C}}
\DeclareMathOperator{\Db}{\mathbb{D}}
\DeclareMathOperator{\Nb}{\mathbb{N}}
\DeclareMathOperator{\Rb}{\mathbb{R}}
\DeclareMathOperator{\Zb}{\mathbb{Z}}
\newcommand{\abs}[1]{\left|#1\right|}
\newcommand{\norm}[1]{\left\|#1\right\|}
\newcommand{\wt}[1]{\widetilde{#1}}
\begin{document}

\title{A metric analogue of Hartogs' theorem}

\author[H. Gaussier]{Herv\'e Gaussier$^1$
}
\address{H. Gaussier: Univ. Grenoble Alpes, CNRS, IF, F-38000 Grenoble, France}
\email{herve.gaussier@univ-grenoble-alpes.fr}

\author[A. Zimmer]{Andrew Zimmer$^2$
}
\address{A. Zimmer: Department of Mathematics, University of Wisconsin-Madison, USA }
\email{amzimmer2@wisc.edu}
\date{\today}
\keywords{}
\subjclass[2010]{}

\thanks{$^1\,$Partially supported by ERC ALKAGE}
\thanks{$^2\,$ Partially supported by grants DMS-2105580 and DMS-2104381 from the
National Science Foundation.}
\begin{abstract}
In this paper we prove a metric version of Hartogs' theorem where the holomorphic function is replaced by a locally symmetric Hermitian metric. As an application, we prove that if the Kobayashi metric on a strongly pseudoconvex domain with $\mathcal{C}^2$ smooth boundary is a K\"ahler metric, then the universal cover of the domain is the unit ball.
\end{abstract}

\maketitle

\section{Introduction}

Recall that Hartogs' theorem states that if $X$ is a Stein manifold with (complex) dimension at least two, $K \subset X$ is compact, and $X\setminus K$ is connected, then any holomorphic function $f : X \setminus K \rightarrow \Cb$ extends to a holomorphic function on all of $X$. In this paper we prove a metric version of Hartogs' theorem where the holomorphic function is replaced by a locally symmetric Hermitian metric.

To state our result precisely we need one technical definition. Given a non-compact manifold $X$, a compact set $K \subset X$ and a Riemannian metric $g_0$ on $X \setminus K$, it is always possible to find a metric $g$ on $X$ and a compact set $K' \subset X$ such that $K \subset K'$ and $g=g_0$ on $X \setminus K'$. Notice that if one such extension is a complete metric, then  all such extensions are complete and in this case we say that $g_0$  is \emph{complete at infinity}.

\begin{theorem}\label{thm:metric_hartog's} Suppose that $X$ is a Stein manifold with $\dim_{\Cb} X \geq 2$, $K \subset X$ is a compact subset where $X \setminus K$ is connected, and $g_0$ is a Hermitian metric on $X \setminus K$ which is complete at infinity. If $g_0$ is locally symmetric, then there exists a complete locally symmetric Hermitian metric  $g$ on $X$ such that $g=g_0$ on $X \setminus K$. 
\end{theorem}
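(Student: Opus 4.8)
The plan is to encode the locally symmetric Hermitian structure on $X\setminus K$ as a holomorphic developing map into a Hermitian symmetric space, to show this developing map extends across $K$ by a Hartogs-type argument exploiting the Stein hypothesis, and finally to pull back the symmetric metric. First I would record the local picture. Since $g_0$ is a locally symmetric Hermitian metric it is Kähler and real-analytic, its curvature tensor is parallel, and at each point of the connected manifold $X\setminus K$ it is locally biholomorphically isometric to one fixed Hermitian symmetric space $M$; by the de Rham decomposition $M\cong\Cb^{\ell}\times M_c\times M_{nc}$ with $M_c$ of compact type and $M_{nc}$ a bounded symmetric domain. Because the geodesic symmetries of a Kähler locally symmetric space are holomorphic, patching together the local isometries produces a holonomy representation $\rho\colon\pi_1(X\setminus K)\to\Aut(M)$ and a $\rho$-equivariant holomorphic local biholomorphism $\dev\colon\widetilde{X\setminus K}\to M$.

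Next I would eliminate the compact factor. The parallel distribution tangent to the $M_c$-factor integrates to a holomorphic foliation of $X\setminus K$ by totally geodesic complex submanifolds, each locally isometric to $M_c$ and hence carrying a metric of positive Ricci curvature bounded below. Using completeness at infinity to prevent a leaf from escaping through an end of $X$, one shows the leaves are complete, hence compact by Bonnet--Myers. A Stein manifold contains no positive-dimensional compact complex subvariety, so $\dim M_c=0$, and therefore $M=\Cb^{\ell}\times M_{nc}$ is a nonpositively curved domain in $\Cb^{n}$, where $n=\dim_{\Cb}X$.

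The heart of the proof is extending the locally symmetric Hermitian structure across $K$. I would work near a sublevel set $K'=\{\varphi\le c\}\supseteq K$ of a strictly plurisubharmonic Morse exhaustion $\varphi$ of $X$, chosen with $c$ a large regular value so that $K'$ is compact, connected, with smooth boundary, and $\partial K'$ is strictly pseudoconcave as seen from the complement. The obstruction to extending the structure over the hole is the monodromy of $\dev$ around it; once this monodromy is shown to vanish, $\dev$ descends near the hole to a genuine holomorphic map into $M$, and by Hartogs' theorem on the Stein manifold $X$ (this is where $\dim_{\Cb}X\ge 2$ and the connectivity of the complement enter) it extends holomorphically across the hole. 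The extended map still takes values in $M$: because $M_{nc}$ is nonpositively curved and $g_0$ is complete at infinity, a finite-length path running through the hole cannot reach $\partial M_{nc}$, where the symmetric metric is complete, since such an escape could only happen at an end of $X$, where $g_0$ is already complete. Pulling back the symmetric metric then gives the sought extension of $g_0$ across $K$. The vanishing of the monodromy I would obtain by analytically continuing the local isometries from the pseudoconcave side across $\partial K'$: the continuation exists and is single-valued by the same mechanism underlying the classical Hartogs theorem --- holomorphic data propagates from the pseudoconcave side into a Stein manifold when $\dim_{\Cb}\ge 2$ --- combined with the rigidity of locally symmetric metrics, whose germs at a point form a finite-dimensional family, so the continuation cannot branch. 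This step is where I expect the main difficulty, and it is the only place where pseudoconvexity of $X$ is genuinely used: on a general complex manifold of dimension at least two the conclusion fails.

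Finally I would verify the conclusion. The metric $g$ produced on $X$ coincides with $g_0$ off the compact set $K'$, so by real-analyticity it is locally symmetric Hermitian everywhere, and it is Hermitian since the developing map is holomorphic. Completeness is automatic: any divergent sequence in $(X,g)$ eventually leaves $K'$ and therefore has infinite $g_0=g$ length, because $g_0$ is complete at infinity. This yields the desired complete locally symmetric Hermitian metric on $X$ extending $g_0$.
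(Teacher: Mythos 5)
Your overall architecture matches the paper's: build a developing map from the universal cover of $X\setminus K$ into a fixed simply connected Hermitian symmetric space $M$, kill the compact factor of $M$ via Bonnet--Myers (using completeness at infinity) plus the fact that a Stein manifold has no positive-dimensional compact complex submanifolds, extend the developing map across $K$, and pull back the symmetric metric. The first two stages and the final completeness check are essentially the paper's Lemmas on local models, the no-compact-factor lemma, and the concluding paragraph.

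The genuine gap is in the extension step, which is the heart of the theorem. You propose to show that the monodromy of $\dev$ ``around the hole'' vanishes, so that $\dev$ descends to a single-valued map on a neighborhood of $\partial K'$ in $X\setminus K$, and then to apply Hartogs on $X$. But the holonomy representation $\rho\colon\pi_1(X\setminus K)\to\Aut(M)$ is \emph{not} trivial in general, even near the hole: since $\pi_1(X\setminus K)\to\pi_1(X)$ is surjective and $\pi_1(X)$ can be nontrivial (e.g.\ Burns--Shnider quotients of the ball, which are exactly the situations the theorem must handle), a topologically nontrivial $K$ forces loops near $\partial K'$ with nontrivial holonomy, and $\dev$ does not descend there. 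What must actually be proved is that $\rho$ kills the kernel of $\pi_1(X\setminus K)\to\pi_1(X)$, so that the extension takes place on $\widetilde{X}$, not on $X$. Your proposed mechanism --- single-valuedness of the analytic continuation because germs of locally symmetric metrics form a finite-dimensional family --- does not give this: finite-dimensionality of the space of germs in no way precludes monodromy (the germs of $\sqrt{z}$ form a two-point fiber and still branch). The paper's key idea, which your sketch is missing, is Kerner's theorem identifying the universal cover of the envelope of holomorphy with the envelope of holomorphy of the universal cover, i.e.\ $E(\widetilde{X\setminus K})=\widetilde{X}$; combined with the Nemirovski\u{\i}--Shafikov extension lemma for locally biholomorphic maps into the Stein target $M$, this extends $\dev$ to a local biholomorphism $F\colon\widetilde{X}\to M$ directly, and the required equivariance under $\pi_1(X)$ (hence the vanishing of the holonomy on the kernel) comes out as a consequence rather than being an input. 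Relatedly, your argument that the extended map still lands in $M$ (``a finite-length path through the hole cannot reach $\partial M_{nc}$'') is circular, since the metric inside the hole is what is being constructed; the paper avoids this by extending through the envelope of holomorphy into the Stein manifold $M$ itself. Finally, note the theorem demands $g=g_0$ on all of $X\setminus K$, not merely outside a larger sublevel set $K'$, which your last paragraph would need an extra (real-analytic continuation) argument to recover.
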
 

Using the fact that a K\"ahler metric with constant holomorphic sectional curvature is locally symmetric, we will prove the following corollary.

\begin{corollary}\label{cor:universal_cover_is_a_ball} Suppose that $X$ is a Stein manifold with $\dim_{\Cb} X \geq 2$ and $g$ is a complete K\"ahler metric on $X$. If there exists a compact set $K \subset X$ such that $g$ has constant negative holomorphic sectional curvature on $X \setminus K$, then the universal cover of $X$ is biholomorphic to the unit ball in $\Cb^{\dim_{\Cb} X}$. 
\end{corollary}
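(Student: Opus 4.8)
The plan is to deduce this from Theorem \ref{thm:metric_hartog's}. First I would verify the hypotheses needed to apply the theorem with $g_0 := g|_{X \setminus K}$. Since $g$ is a complete K\"ahler metric on all of $X$, it is in particular a Hermitian metric, and its restriction to $X \setminus K$ is complete at infinity (this is immediate: $g$ itself is one of the extensions of $g_0$ and it is complete, so by definition $g_0$ is complete at infinity). It remains to check that $g_0$ is locally symmetric on $X \setminus K$. This is exactly the classical fact alluded to in the sentence preceding the corollary: a K\"ahler metric with constant holomorphic sectional curvature has parallel curvature tensor, hence is locally symmetric. Concretely, for a K\"ahler metric the full curvature tensor is determined algebraically by the holomorphic sectional curvature function via the Bochner/Apte formula, and if that function is a constant $c$ then $R$ has the standard constant-holomorphic-sectional-curvature form; differentiating, and using the second Bianchi identity together with $\nabla J = 0$, one gets $\nabla R = 0$. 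So $g_0$ is locally symmetric (one may quote Kobayashi--Nomizu here rather than grind the computation).

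With the hypotheses verified, Theorem \ref{thm:metric_hartog's} produces a complete locally symmetric Hermitian metric $\wt{g}$ on all of $X$ with $\wt{g} = g_0 = g$ on $X \setminus K$. Now I would pass to the universal cover $\wt{X} \to X$ and pull back $\wt g$; the pullback is a complete, simply connected, locally symmetric Hermitian manifold. Next I want to see that this manifold is actually K\"ahler and globally symmetric. Completeness plus local symmetry plus simple connectivity gives, by the classical theory (Cartan), that $\wt X$ with the pulled-back metric is a globally symmetric Riemannian space. The point is then to identify \emph{which} symmetric space. Because $\wt g$ agrees with the K\"ahler metric $g$ on the open set $X \setminus K$, the complex structure $J$ is parallel there; but on a locally symmetric space a tensor that is parallel on a nonempty open set and that agrees with the globally defined parallel extension... here I would argue that the holomorphic sectional curvature of $\wt g$ is the constant $c < 0$ on the nonempty open set $\wt X \setminus \pi^{-1}(K)$, hence, being a parallel-tensor quantity on the irreducible pieces, it is $c$ everywhere, forcing $\wt X$ to be a noncompact Hermitian symmetric space of rank one with constant negative holomorphic sectional curvature, i.e.\ complex hyperbolic space $\Bb^n$. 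Biholomorphically $\Bb^n$ is the unit ball in $\Cb^n$, which is the conclusion.

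The main obstacle I anticipate is the identification step: upgrading "complete locally symmetric Hermitian" on $\wt X$ to "globally symmetric Hermitian of constant holomorphic sectional curvature $c$ everywhere." Two things need care. First, local symmetry as produced by the theorem is a statement about the Levi-Civita connection of the Hermitian metric; I must make sure the complex structure interacts well with it on all of $X$ (not just on $X \setminus K$), e.g.\ that $\wt g$ is in fact K\"ahler globally — this should follow because a locally symmetric Hermitian metric that is K\"ahler on an open dense set has $\nabla J$ a parallel tensor vanishing on that set, hence vanishing identically, so $\wt g$ is K\"ahler. Second, I need that the constancy of the holomorphic sectional curvature propagates from $\wt X \setminus \pi^{-1}(K)$ to all of $\wt X$; this is where I would invoke that on a (connected, simply connected, complete) locally symmetric space the isometry group acts transitively, so any curvature invariant that is constant on a nonempty open set is constant everywhere, and then the de Rham decomposition has a single factor which must be the ball. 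Once that identification is in hand, the rest is bookkeeping: the universal cover is $\Bb^n$, which is biholomorphic to the unit ball in $\Cb^{\dim_{\Cb}X}$.
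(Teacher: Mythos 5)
Your proposal follows essentially the same route as the paper: restrict $g$ to $X\setminus K$, check the hypotheses of Theorem~\ref{thm:metric_hartog's}, apply it, and then use homogeneity of the resulting (globally symmetric) universal cover to propagate the constant negative holomorphic sectional curvature from an open set to everywhere, identifying the cover with the ball. Two remarks. First, there is one hypothesis you do not verify: Theorem~\ref{thm:metric_hartog's} requires $X\setminus K$ to be \emph{connected}, and the corollary does not assume this. The paper fixes this by invoking the fact that a Stein manifold of dimension at least two is one ended (Theorem~\ref{thm:one-ended}), so $K$ can be enlarged to a compact set whose complement is connected; without this step the theorem cannot be applied as stated. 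Second, your verification of local symmetry goes through the parallel curvature tensor ($\nabla R=0$ via the algebraic form of $R$ for constant holomorphic sectional curvature), whereas the paper uses the Cartan--Ambrose--Hicks theorem to produce the geodesic symmetries directly; both are standard and correct. Your anticipated obstacle about whether the extended metric is K\"ahler is resolved more cleanly than your sketch suggests: the argument you propose (that $\nabla J$ is ``a parallel tensor vanishing on an open set'') does not work as stated, since $\nabla J$ is not a priori parallel, but the issue is moot because the metric produced by Theorem~\ref{thm:metric_hartog's} is Hermitian locally symmetric, and such metrics are automatically K\"ahler (the paper records this in Section~\ref{subsec:symmetric spaces}, citing \cite[Chapter VIII, Proposition 4.1]{H2001}); indeed the construction realizes the universal cover as locally modeled on a Hermitian symmetric space via a holomorphic map. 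Finally, the paper concludes by citing Hawley and Igusa for the uniqueness of the simply connected complete K\"ahler manifold of constant negative holomorphic sectional curvature, rather than running through the de Rham decomposition and the classification of Hermitian symmetric spaces; your version of this last step is correct in substance but should be tightened by noting that the transitive group consists of \emph{holomorphic} isometries, so that holomorphic sectional curvature is indeed an invariant that propagates.
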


It is fairly easy to see that the assumption that $\dim_{\Cb} X \geq 2$ is necessary in Corollary~\ref{cor:universal_cover_is_a_ball} and hence also in Theorem~\ref{thm:metric_hartog's}. For instance, let $f : \Rb \rightarrow (0,\infty)$ be a smooth function such that $f(y) \geq 1$ when $\abs{y} \leq 1$ and $f(y) = \frac{1}{y^2}$ when $\abs{y} > 1$. Then the K\"ahler metric
$$
g_z(v,w) = f({\rm Im}(z)) {\rm Re}(v\bar{w})
$$
on $\Cb$ (here we identify $T_z \Cb \simeq \Cb$) is complete and has constant negative holomorphic sectional curvature on $ \{ x+iy \in \Cb : \abs{y} > 1\}$. Further, real translations act by isometries on $(\Cb, g)$. Hence the metric $g$ descends to a complete K\"ahler metric on $X = \Zb \backslash \Cb$ which has constant negative holomorphic sectional curvature outside a compact set.  Finally, from the Behnke-Stein theorem \cite{BS1949}, $X$ is a Stein manifold.

Recall that a simply connected non-positively curved K\"ahler manifold is Stein~\cite{GW1979}. In this special case, a proof of Corollary~\ref{cor:universal_cover_is_a_ball} was outlined by Greene~\cite[pp. 344]{G1982} and later established, using a different approach, by Seshadri-Verma~\cite{SV2006}. 

Seshadri-Verma~\cite{SV2006} also conjectured that if a simply connected non-positively curved K\"ahler manifold is locally symmetric outside a compact set, then it is biholomorphic to a Hermitian symmetric space. Theorem~\ref{thm:metric_hartog's} provides a positive answer to their conjecture.

\subsection{An application} As an application of Corollary~\ref{cor:universal_cover_is_a_ball} we study coincidences between the various classical invariant metrics on a bounded pseudoconvex domain in complex Euclidean space. In particular, such a domain  has a number of metrics: the Kobayashi metric, the Bergman metric, and the unique up to scaling K\"ahler-Einstein metric. All of these metrics coincide, up to a multiplicative constant, on the unit ball but there is no reason to think this would happen for a generic domain. This leads to the following natural question.

\begin{question}\label{prob:coincidence} What are the domains where the Kobayashi, Bergman, and K\"ahler-Einstein metrics are not pairwise distinct (up to scaling)? \end{question} 

This is a variant of an old and well known problem: In 1979, Cheng~\cite{C1979} conjectured that on a strongly pseudoconvex domain the Bergman metric is K\"ahler-Einstein if and only if the domain is biholomorphic to the unit ball. More generally, Yau~\cite[problem no. 44]{Yau1982} asked (in a slightly different form)  if it was possible to classify the pseudoconvex domains where the Bergman metric is K\"ahler-Einstein. 

Recently, Huang and Xiao established Cheng’s conjecture for strongly pseudoconvex domains with $\Cc^\infty$ boundary.

\begin{theorem}\cite{HX2021}\label{thm:Cheng} If $\Omega \subset \Cb^d$ is a bounded strongly pseudoconvex domain with $\Cc^\infty$ boundary, then the Bergman metric is K\"ahler-Einstein if and only if $\Omega$ is biholomorphic to the unit ball. 
\end{theorem}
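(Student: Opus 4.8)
The forward implication is classical, so I would handle it first: a biholomorphism carries the Bergman metric to the Bergman metric, and the Bergman metric of $\Bb^d$ has constant holomorphic sectional curvature and is therefore K\"ahler--Einstein; hence any $\Omega$ biholomorphic to $\Bb^d$ has K\"ahler--Einstein Bergman metric. The substance is the converse, and the plan is to reduce it to showing that $\partial\Omega$ is \emph{spherical}, i.e.\ locally CR-equivalent to the sphere $S^{2d-1}$. The first point is that the K\"ahler--Einstein hypothesis pins $g_B$ down. On a bounded strongly pseudoconvex domain $g_B$ is complete and K\"ahler, and by Cheng--Yau and Mok--Yau there is a unique complete K\"ahler--Einstein metric $\omega_{KE}$ with $\mathrm{Ric}(\omega_{KE})=-(d+1)\,\omega_{KE}$, realized as $\omega_{KE}=-i\partial\bar\partial\log u$ where $u>0$ solves Fefferman's equation $J(u)=1$, $u|_{\partial\Omega}=0$. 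If $g_B$ is K\"ahler--Einstein then a comparison with the boundary behaviour of $g_B$ (Klembeck's theorem, which says the full curvature tensor of $g_B$ converges near $\partial\Omega$ to that of complex hyperbolic space) shows its Einstein constant is the one realized on $\Bb^d$, so uniqueness forces $g_B=(d+1)\,\omega_{KE}$; equivalently, $\log K_\Omega+(d+1)\log u$ is pluriharmonic on $\Omega$.

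Next I would play the two available boundary parametrices against each other. Fefferman's expansion gives $K_\Omega=\phi\,\rho^{-(d+1)}+\psi\log\rho$ with $\phi,\psi\in\Cc^\infty(\overline{\Omega})$, $\phi|_{\partial\Omega}>0$, and $\psi|_{\partial\Omega}$ a local CR invariant; the Lee--Melrose expansion of the Cheng--Yau solution has the form $u=\rho\bigl(1+\sum_{k\ge 1}\eta_k\rho^k+\zeta\,\rho^{d+1}\log\rho+\cdots\bigr)$, where the coefficients depend only on the germ of $\partial\Omega$ and $\zeta|_{\partial\Omega}$ is the first obstruction to a smooth solution of $J(u)=1$. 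Since a pluriharmonic function is, near a smooth boundary point, the real part of a holomorphic function and so extends smoothly past $\partial\Omega$, substituting the two expansions into $\log K_\Omega+(d+1)\log u$ and requiring that all logarithmically singular terms cancel yields relations among these invariants; at leading order one already gets $\psi|_{\partial\Omega}=c_d\,\phi|_{\partial\Omega}\,\zeta|_{\partial\Omega}$.

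Promoting this leading-order identity to the vanishing of the whole obstruction hierarchy is where I expect the real difficulty to lie. The plan would be an induction on the Taylor order of the two expansions: using simultaneously the recursive structure of Fefferman's equation $J(u)=1$, the transformation behaviour of the Bergman kernel, and the pluriharmonicity of $\log K_\Omega+(d+1)\log u$ order by order, one aims to show that every higher log coefficient of both $u$ and $K_\Omega$ vanishes, so that near every boundary point $J(u)=1$ has a genuine $\Cc^\infty(\overline{\Omega})$ solution, i.e.\ all of Fefferman's local CR invariants of $\partial\Omega$ vanish; this is exactly the assertion that $\partial\Omega$ is spherical. Keeping the two expansions mutually compatible through all orders, and verifying that their interaction is rigid enough to kill more than just the leading obstruction, is the technical heart of the argument and the step I would budget the most effort for.

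Finally, once $\partial\Omega$ is known to be spherical I would invoke the rigidity of such domains: for $d\ge 2$, a bounded strongly pseudoconvex domain in $\Cb^d$ with spherical boundary is biholomorphic to $\Bb^d$ (this goes back to the Chern--Moser normal form together with extension theorems of Pinchuk, with a definitive account by Chern--Ji), while the case $d=1$ is classical. It is worth stressing that one cannot shortcut this via Corollary~\ref{cor:universal_cover_is_a_ball}: for a general such $\Omega$ the Bergman metric is only \emph{asymptotically} of constant holomorphic sectional curvature near $\partial\Omega$ --- the deviation vanishes to infinite order but not identically --- so the asymptotic analysis above genuinely seems to be needed.
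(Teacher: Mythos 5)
First, a point of order: the paper does not prove this statement. Theorem~\ref{thm:Cheng} is quoted from Huang--Xiao \cite{HX2021} and serves here only as background for Question~\ref{prob:coincidence}, so there is no in-paper proof to compare yours against. Judged against the argument in the literature, your outline of the analytic half --- pinning $g_B$ to the Cheng--Yau metric via Klembeck's theorem and uniqueness, then playing Fefferman's expansion of the Bergman kernel against the expansion of the Monge--Amp\`ere solution to kill the logarithmic terms and conclude that $\partial\Omega$ is spherical --- is the right strategy (it is how Fu--Wong handled $d=2$ and how Huang--Xiao proceed in general), though, as you acknowledge, the order-by-order cancellation is the technical heart and you do not carry it out. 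One smaller slip there: a pluriharmonic function on $\Omega$ need not extend smoothly past $\partial\Omega$ (being locally $\Real$ of a holomorphic function on $\Omega$ gives no boundary regularity), so the vanishing of the log coefficients must be extracted by comparing asymptotic expansions directly rather than by asserting smoothness of $\log K_\Omega+(d+1)\log u$ up to the boundary.

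The genuine gap is your final step. It is false that a bounded strongly pseudoconvex domain in $\Cb^d$ with spherical boundary must be biholomorphic to $\Bb$: the Burns--Shnider examples \cite{BS1976} cited in this very paper are bounded strongly pseudoconvex domains in $\Cb^d$ with nontrivial fundamental group whose universal cover is the ball, and their boundaries are spherical. The Chern--Ji/Pinchuk rigidity you invoke requires simple connectivity (of the boundary), which you have not established and which does not follow from sphericity. What sphericity actually yields, by Nemirovski\u{\i}--Shafikov \cite{NS2005,NS2005b} (via Kerner's theorem, Theorem~\ref{thm:ker}), is only that the universal cover of $\Omega$ is $\Bb$. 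Passing from ``covered by the ball'' to ``equal to the ball'' requires a further argument that uses the Bergman--Einstein hypothesis again to rule out nontrivial quotients; this is a substantial component of \cite{HX2021} (reflected in the ``generalization of Kerner's theorem'' in its title) and is entirely absent from your proposal.
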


\begin{remark} In the special case when $d=2$ and $\Omega$ is simply connected, Theorem~\ref{thm:Cheng} was established by Fu and Wong~\cite{FW1997}. \end{remark}

As an application of Corollary~\ref{cor:universal_cover_is_a_ball}  we investigate the bounded domains in $\Cb^d$ where the Kobayashi metric coincides up to scaling with the Bergman or K\"ahler-Einstein metric, or more generally is a K\"ahler metric. In this direction, the best result appears to be from the 1980's and is due to Stanton~\cite{S1983}: if either the Kobayashi metric or the Carath\'eodory metric is a smooth Hermitian metric and the two metrics coincide at a point, then the domain is biholomorphic to the unit ball. By deep work of Lempert~\cite{L1981} this in particular shows that the Kobayashi metric on a bounded convex domain is a K\"ahler metric if and only if the domain is biholomorphic to the unit ball. 

We also note that Burns-Shnider~\cite{BS1976} have constructed examples of bounded strongly pseudoconvex domains in $\Cb^d$ with non-trivial fundamental groups whose universal cover is the unit ball. For these examples, the Kobayashi metric and the K\"ahler-Einstein metric coincide up to scaling. So as one moves beyond the case of convex domains, more examples appear.

For strongly pseudoconvex domains, we prove the following general result. 

\begin{theorem}\label{thm:spcv}
Suppose that $\Omega \subset \Cb^d$ is a bounded strongly pseudoconvex domain with $\Cc^2$ boundary. Then the following are equivalent: 
\begin{enumerate}
\item the Kobayashi metric on $\Omega$ is a K\"ahler metric,
\item  the Kobayashi metric on $\Omega$ is a K\"ahler metric with constant holomorphic sectional curvature, 
\item the universal cover of $\Omega$ is biholomorphic to the unit ball.
\end{enumerate} 
\end{theorem}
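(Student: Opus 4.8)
The plan is to deduce Theorem~\ref{thm:spcv} from Corollary~\ref{cor:universal_cover_is_a_ball}, with the Kobayashi metric playing the role of the K\"ahler metric $g$ in the corollary. The implication $(3)\Rightarrow(2)$ is the easiest: if the universal cover $\wt\Omega$ is biholomorphic to the unit ball $\Bb^d$, then since the Kobayashi metric is invariant under biholomorphisms and the covering group acts by isometries, the Kobayashi metric on $\Omega$ is the pushforward of the Kobayashi metric on $\Bb^d$, which is the Bergman metric (up to scaling), hence a smooth K\"ahler metric with constant negative holomorphic sectional curvature. The implication $(2)\Rightarrow(1)$ is trivial. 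So the real content is $(1)\Rightarrow(3)$.

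For $(1)\Rightarrow(3)$, suppose the Kobayashi metric $g$ on $\Omega$ is a K\"ahler metric (in particular a smooth Hermitian metric). First I would recall that for a bounded strongly pseudoconvex domain with $\Cc^2$ boundary, the Kobayashi metric is complete (it is comparable to a metric that blows up like $\dist(\cdot,\partial\Omega)^{-1}$ in the complex normal direction near the boundary, by work of Graham and others), so $g$ is a complete K\"ahler metric on $\Omega$. Next, $\Omega$ is pseudoconvex and bounded, hence Stein, and $\dim_{\Cb}\Omega = d \geq 2$ — here one needs $d\ge 2$; I should note that for $d=1$ a bounded strongly pseudoconvex domain is a bounded domain in $\Cb$, whose Kobayashi metric is the Poincar\'e metric, always K\"ahler, but whose universal cover need not be the ball (it is the disk, which is the $1$-dimensional ball, so actually the statement would be vacuously fine for $d=1$; still, the corollary requires $d\ge2$, so I restrict to $d\ge 2$ and handle or exclude $d=1$ separately). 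The key remaining point is to produce a compact set $K\subset\Omega$ outside of which $g$ has constant negative holomorphic sectional curvature. This is where the deep boundary asymptotics of the Kobayashi metric enter: by the Lempert theory and its refinements (and work on the boundary behavior of the Kobayashi metric on strongly pseudoconvex domains), near $\partial\Omega$ the Kobayashi indicatrix is asymptotically that of the ball, and more precisely the curvature of the Kobayashi metric tends to the constant negative value of the ball metric as one approaches the boundary. However, for the corollary we need \emph{exactly} constant curvature on $X\setminus K$, not merely asymptotically; the mechanism for upgrading "asymptotically constant curvature" to "exactly constant curvature outside a compact set" must come from a rigidity or analyticity argument — for instance, the holomorphic sectional curvature of a K\"ahler metric is a real-analytic-type constraint, or one uses that the Kobayashi metric, being K\"ahler and with the prescribed boundary asymptotics, forces the Monge--Amp\`ere / complex geodesic structure (Lempert's foliation by extremal disks) to be that of the ball in a collar neighborhood of $\partial\Omega$.

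Granting that one has a compact $K\subset\Omega$ with $g$ of constant negative holomorphic sectional curvature on $\Omega\setminus K$, Corollary~\ref{cor:universal_cover_is_a_ball} applies directly: $\Omega$ is Stein, $\dim_{\Cb}\Omega\ge 2$, $g$ is a complete K\"ahler metric, so the universal cover of $\Omega$ is biholomorphic to $\Bb^d$. This gives $(3)$ and closes the cycle $(1)\Rightarrow(3)\Rightarrow(2)\Rightarrow(1)$.

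The main obstacle I anticipate is precisely the step producing exact constant curvature in a collar of the boundary from the hypothesis that the Kobayashi metric is globally K\"ahler. The asymptotic statement (Kobayashi metric converges to the ball metric near $\partial\Omega$ in a suitable scaling) is classical, but turning "K\"ahler plus asymptotically hyperbolic" into "honestly hyperbolic near the boundary" is the crux: I expect this uses Lempert's theory of complex geodesics together with a rigidity statement saying that a K\"ahler metric whose indicatrices agree with the ball's to sufficiently high order along the boundary must have constant holomorphic sectional curvature there, or alternatively an argument via the homogeneity of the model and unique continuation for the relevant PDE. Everything else — completeness of the Kobayashi metric, Steinness of $\Omega$, and the soft implications $(3)\Rightarrow(2)\Rightarrow(1)$ — is standard and should be quick.
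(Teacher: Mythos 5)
Your reduction to Corollary~\ref{cor:universal_cover_is_a_ball} is exactly the paper's strategy, and your treatment of $(3)\Rightarrow(2)\Rightarrow(1)$, of completeness, and of Steinness is fine. But the step you flag as "the crux" --- producing a compact $K$ with \emph{exactly} constant holomorphic sectional curvature on $\Omega\setminus K$ --- is left as an acknowledged hole, and the direction you gesture toward (asymptotic curvature estimates near $\partial\Omega$ upgraded by "rigidity or unique continuation") is not how this is done and does not obviously close. Asymptotic agreement with the ball metric gives no exact identity on any open set, so there is nothing for unique continuation to propagate from.

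The actual mechanism is pointwise in $z$ and works in the fiber $T_z\Omega$, not in the base. First, for $z$ in a collar of $\partial\Omega$ one shows (Theorem~\ref{thm:tangential geodesics}) that the set $E_\Omega(z)$ of directions admitting a complex geodesic $\varphi:\Db\to\Omega$ through $z$ \emph{with a holomorphic left inverse} $\rho:\Omega\to\Db$ has non-empty interior in $T_z\Omega$; this uses Lempert's theory after locally embedding $\Omega$ into a strongly convex domain, plus a rescaling argument to see that tangential complex geodesics from points near $p\in\partial\Omega$ stay in a prescribed neighborhood of $p$. Second, for $v\in E_\Omega(z)$ the pair $(\varphi,\rho)$ forces $\varphi$ to be an isometric embedding of $(\Db,K_\Db)$ into $(\Omega,K_\Omega)$, and Wong's argument then gives $H(z;v)=-4$ exactly --- no asymptotics involved. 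Third, and this is the elementary observation replacing your hoped-for rigidity theorem: for fixed $z$ the function $v\mapsto H(z;v)$ is a ratio of polynomials in the real coordinates of $v$ (quartic curvature expression over $\norm{v}^4$), hence real-rational; since it equals $-4$ on a non-empty open subset of $T_z\Omega$, it equals $-4$ for all $v\neq 0$. This yields constant curvature $-4$ on $\{\delta_\Omega<\varepsilon\}$ and Corollary~\ref{cor:universal_cover_is_a_ball} finishes. Without the open-set-of-good-directions statement and the rationality-in-$v$ step, your outline does not constitute a proof of $(1)\Rightarrow(3)$.
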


As corollaries we obtain the following. 

\begin{corollary} Suppose that $\Omega \subset \Cb^d$ is a bounded strongly pseudoconvex domain with $\Cc^2$ boundary. Then the Bergman metric is a scalar multiple of the Kobayashi metric if and only if $\Omega$ is biholomorphic to the unit ball. 
\end{corollary}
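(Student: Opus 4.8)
The plan is to combine Theorem~\ref{thm:spcv} with the classical rigidity of bounded domains whose Bergman metric has constant holomorphic sectional curvature. The reverse implication is routine: the Bergman and Kobayashi metrics are biholomorphic invariants, so it suffices to check that on $\Bb^d$ the two coincide up to scaling, and there both are invariant under the transitive action of $\Aut(\Bb^d)$ while the isotropy group $\mathrm{U}(d)$ at the origin acts irreducibly on $T_0\Bb^d\cong\Cb^d$; hence each of $g_B$ and $g_K$ is a positive multiple of the standard Hermitian form at the origin, so they are proportional there and, by homogeneity, on all of $\Bb^d$.

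For the forward implication, suppose $g_B=\lambda\,g_K$ on $\Omega$ for a constant $\lambda>0$. Since the Bergman metric of a bounded domain is a (real-analytic) K\"ahler metric, $g_K=\lambda^{-1}g_B$ is a K\"ahler metric; that is, condition (1) of Theorem~\ref{thm:spcv} holds, and by the implication (1)$\Rightarrow$(2) of that theorem $g_K$ --- hence also $g_B=\lambda g_K$ --- has constant holomorphic sectional curvature. Since $\Omega$ is a bounded strongly pseudoconvex domain with $\Cc^2$ boundary it is hyperconvex, so its Bergman metric is complete. I would then quote the theorem of Lu Qi-Keng: a bounded domain in $\Cb^d$ whose Bergman metric is complete and has constant holomorphic sectional curvature is biholomorphic to the unit ball. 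This yields $\Omega\cong\Bb^d$.

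The step I expect to be the crux is this last one --- passing from the curvature information to the conclusion that $\Omega$ itself, and not merely its universal cover (which Theorem~\ref{thm:spcv} already identifies with $\Bb^d$), is the ball. If one wishes to avoid citing Lu Qi-Keng's theorem, an alternative is to take a holomorphic universal covering $\pi:\Bb^d\to\Omega$ provided by Theorem~\ref{thm:spcv} and pull back the identity $g_B=\lambda g_K$: since $\pi^*g_K$ is the Kobayashi metric of $\Bb^d$, which is proportional to the Bergman metric of $\Bb^d$, one obtains that $\pi^*g_B$ is proportional to the Bergman metric of $\Bb^d$, and it then remains to deduce from the transformation law (a Poincar\'e series) for Bergman kernels under the covering that the deck group is trivial. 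Either way the essential new input beyond Theorem~\ref{thm:spcv} is the completeness of the Bergman metric of a bounded strongly pseudoconvex domain together with the rigidity of domains carrying a constant-curvature Bergman metric.
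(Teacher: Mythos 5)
Your proposal is correct and follows essentially the same route as the paper: deduce from Theorem~\ref{thm:spcv} that the Kobayashi metric (hence the Bergman metric) has constant holomorphic sectional curvature, then invoke Lu Qi-Keng's theorem to conclude $\Omega$ is the ball. You additionally make explicit the completeness hypothesis in Lu's theorem and its verification for strongly pseudoconvex domains, which the paper leaves implicit; the alternative covering-space argument you sketch is not needed.
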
 

\begin{proof} If the Bergman metric is a scalar multiple of the Kobayashi metric, then Theorem~\ref{thm:spcv} implies that the Bergman metric has constant holomorphic sectional curvature. Then by a result of Lu~\cite{L1966}, $\Omega$ is biholomorphic to the unit ball. 

Conversely, if $\Omega$ is biholomorphic to the ball, then it is well known that the Bergman metric is a scalar multiple of the Kobayashi metric.
\end{proof} 

\begin{corollary} Suppose that $\Omega \subset \Cb^d$ is a bounded strongly pseudoconvex domain with $\Cc^2$ boundary. Then the K\"ahler-Einstein metric is a scalar multiple of the Kobayashi metric if and only if the universal cover of $\Omega$ is biholomorphic to the unit ball. 
\end{corollary}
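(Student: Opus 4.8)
The plan is to prove the two implications separately, with the forward direction being essentially immediate from Theorem~\ref{thm:spcv}. Suppose first that the K\"ahler-Einstein metric $g_{KE}$ on $\Omega$ is a scalar multiple of the Kobayashi metric. Since $g_{KE}$ is in particular a K\"ahler metric, so is the Kobayashi metric, and then the implication (1) $\Rightarrow$ (3) of Theorem~\ref{thm:spcv} shows that the universal cover of $\Omega$ is biholomorphic to the unit ball. No further work is needed here.

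For the converse, suppose $\pi \colon \wt{\Omega} \to \Omega$ is a holomorphic universal covering with $\wt{\Omega}$ biholomorphic to $\Bb^d$. The idea is to lift both metrics to $\wt{\Omega}$ and compare them there. Since every analytic disc into $\Omega$ lifts through $\pi$ (as $\Db$ is simply connected), the infinitesimal Kobayashi metrics satisfy $k_\Omega(\pi(p); d\pi_p v) = k_{\wt{\Omega}}(p; v)$; that is, $\pi^* k_\Omega$ is precisely the Kobayashi metric of $\Bb^d$, which is a fixed scalar multiple of the standard (Bergman, equivalently K\"ahler-Einstein) metric of the ball. On the other hand, $\Omega$ is a bounded pseudoconvex domain, so $g_{KE}$ is complete, and its pullback $\pi^* g_{KE}$ is a complete K\"ahler-Einstein metric on $\Bb^d$ with the same negative Einstein constant as $g_{KE}$; by uniqueness of the complete K\"ahler-Einstein metric with a prescribed Einstein constant, $\pi^* g_{KE}$ is also a scalar multiple of the standard metric on $\Bb^d$. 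Hence $\pi^* g_{KE}$ and $\pi^* k_\Omega$ differ by a single global constant on $\Bb^d$, and since both descend to metrics on $\Omega$ the same constant relates $g_{KE}$ and the Kobayashi metric on $\Omega$. (As a byproduct, in this case the Kobayashi metric is itself a smooth Hermitian metric, indeed K\"ahler-Einstein.)

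The only genuinely nontrivial inputs are: (i) that the Kobayashi metric of a covering space pushes down to that of the base, which is standard once one invokes the disc-lifting property of covering maps; and (ii) uniqueness of the complete K\"ahler-Einstein metric on $\Bb^d$ with prescribed negative Einstein constant, which is part of the Cheng-Yau theory. Note that although the standard hyperbolic metric on $\Bb^d$ is $\Aut(\Bb^d)$-invariant, the pullback $\pi^* g_{KE}$ is a priori only invariant under the deck group, so one really does need the uniqueness theorem rather than a homogeneity argument. I expect the only point to watch is the bookkeeping of normalizations: "the" K\"ahler-Einstein metric and the Kobayashi metric each carry a natural normalization, and one must check that matching Einstein constants on $\Bb^d$ pins down a single global scalar on $\Omega$ rather than merely a pointwise proportionality.
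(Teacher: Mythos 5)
Your proposal is correct and follows essentially the same route as the paper: the forward direction is immediate from Theorem~\ref{thm:spcv}, and the converse descends the proportionality from the ball to $\Omega$ via the covering map. The paper compresses your second paragraph into the single assertion that holomorphic covering maps between bounded domains are local isometries for both the Kobayashi and K\"ahler--Einstein metrics; your appeal to disc-lifting and to Cheng--Yau uniqueness of the complete K\"ahler--Einstein metric is exactly the justification that assertion leaves implicit.
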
 

\begin{proof} If the K\"ahler-Einstein metric is a scalar multiple of the Kobayashi metric, then Theorem~\ref{thm:spcv} implies that the universal cover of $\Omega$ is biholomorphic to the unit ball. 

Conversely, if the universal cover $\wt{\Omega}$ of $\Omega$ is biholomorphic to the unit ball, then on $\wt{\Omega}$ the K\"ahler-Einstein metric is a scalar multiple of the Kobayashi metric. Since holomorphic covering maps between bounded domains are local isometries for both the  K\"ahler-Einstein metric and the Kobayashi metric, we see that on $\Omega$ the K\"ahler-Einstein metric is a scalar multiple of the Kobayashi metric.
\end{proof} 

\subsection{Ideas in the proofs}  One of the key ideas in the proof of Theorem~\ref{thm:metric_hartog's}  is to use a result of Kerner (see Theorem~\ref{thm:ker} below). This approach is motivated by earlier work of Nemirovski\u{\i}-Shafikov~\cite{NS2005,NS2005b} on uniformizing strongly pseudconvex domains with spherical boundary. 

The non-trivial part of Theorem~\ref{thm:spcv} consists in proving that (1) $\Rightarrow$ (3). For this direction, we first show that at a point sufficiently close to the boundary of a strongly pseudoconvex domain, there is an open set of directions where the Kobayashi and Carath\'eodory metrics agree. Next we observe that if the Kobayashi metric is K\"ahler, then the Kobayashi metric must have constant negative holomorphic sectional curvature in these directions. This is used to show that the Kobayashi metric has constant negative holomorphic sectional curvature outside a compact set and hence by Corollary~\ref{cor:universal_cover_is_a_ball} the universal cover is biholomorphic to the ball. We note that the first step in this argument is motivated by work of Huang~\cite{H1994b} while the second is motivated by work of Wong~\cite{W1977b} (see~\cite{S1983} for some corrections).

\subsection*{Acknowledgements} We are indebted to Stefan Nemirovski\u{\i} for comments on an earlier version of this paper. In particular, he suggested that Kerner's theorem \cite{K1961} could be used to improve our results and this was indeed the case. 

\section{Preliminaries}

We fix some basic notations: 
\begin{itemize}
\item Throughout the paper, $d$ is an integer satisfying $d \geq 2$ and $(z_1,\dots,z_d)$ denotes the standard coordinates in $\Cb^d$. 
\item When the context is clear, we use $\norm{\cdot}$ to denote the Euclidean norm on $\Cb^d$. Then define $\Bb(z_0,r):=\{ z \in \Cb^d : \norm{z-z_0} < r\}$, $\Bb := \Bb(0,1)$, and  let $\Db \subset \Cb$ denote the unit disk. 
\item Given a domain $\Omega \subset \Cb^d$ we let $k_\Omega$ denote the Kobayashi infinitesimal pseudo-metric and let $K_\Omega$ denote the Kobayashi pseudo-distance obtained by integrating $k_\Omega$ along piecewise smooth curves. When $\Omega$ is bounded, $k_\Omega$ is non-degenerate and $K_\Omega$ is a distance. We normalize the Kobayashi metric so that $k_{\Bb}$ has constant holomorphic sectional curvature equal to -4.
\end{itemize}

\subsection{The topology of Stein manifolds} In this section we record some basic topological properties of Stein manifolds. 

We first recall that Stein manifolds are one ended, for a proof see~\cite[page 227]{GR2009}.

\begin{theorem}\label{thm:one-ended} If $X$ is a connected Stein manifold with $\dim_{\Cb} X \geq 2$, then $X$ is one ended (that is, if $K \subset X$ is a compact set, then there exists a compact set $K^\prime \subset X$ such that $K \subset K^\prime$ and $X \setminus K^\prime$ is connected). \end{theorem}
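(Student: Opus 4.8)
The plan is to prove that a connected Stein manifold $X$ of complex dimension $d \geq 2$ has only one end by exploiting a strictly plurisubharmonic exhaustion function together with the Morse theory of such functions. First I would invoke the defining property of a Stein manifold: there exists a smooth strictly plurisubharmonic exhaustion function $\varphi : X \to \Rb$. After a small perturbation (Morse's lemma, which preserves strict plurisubharmonicity for $C^2$-small perturbations), we may assume $\varphi$ is a Morse function. The key point from Andreotti--Frankel / complex Morse theory is that at a critical point of a strictly plurisubharmonic Morse function on a $d$-dimensional complex manifold, the Morse index is at most $d$; this is because the (real) Hessian of $\varphi$ at a critical point, when restricted to any complex line, has positive trace (its restriction to a complex line is, up to lower-order terms, the Levi form plus a traceless piece), forcing the negative eigenspace to be totally real and hence of real dimension $\le d$.

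Next I would run the standard Morse-theoretic argument. Given a compact set $K$, choose $c$ large enough that $K \subset \{\varphi < c\}$ and $c$ is a regular value; set $M = \{\varphi \le c\}$, a compact manifold with boundary, and note $X \setminus K \supseteq X \setminus M$. It suffices to show $X \setminus M$ is connected (then take $K' = M$, enlarging slightly to make it have nonempty interior if one insists $K'$ be a genuine compact set containing $K$ — or simply note $K \subset M$ and $X \setminus M$ connected gives what we want). Because $\varphi$ is an exhaustion, $X \setminus M = \{\varphi > c\}$ is built up from $\{\varphi \ge c'\}$ for large $c'$ by attaching handles as we decrease the level; more precisely, consider the sublevel sets from the top: the homotopy/handle structure of $\{c \le \varphi \le c'\}$ is obtained from $\{\varphi = c'\}$ by attaching cells of dimension equal to the Morse index at each critical point in the slab, which is $\le d$.

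The crucial observation is the codimension count: a handle of index $\lambda \le d$ attached to a $2d$-dimensional manifold has cocore of dimension $2d - \lambda \ge 2d - d = d \ge 2$, so attaching it does not disconnect the manifold and does not create a new connected component (attaching a $\lambda$-handle can only change $\pi_0$ when $\lambda = 0$, i.e.\ when we add a whole new component, or affect connectivity via the boundary when $\lambda$ is large; here $\lambda \le d$ and $2d - \lambda \ge 2$ rules out the problematic cases). Concretely: for $c'$ very large, $\{\varphi \ge c'\}$ is connected (one can see this directly, or argue that $\{\varphi = c'\}$ is connected for generic large $c'$ since otherwise $X$ would have been disconnected by a compact piece at every level, contradicting that the slabs only attach low-index handles as one goes up); then each time we pass a critical level going downward from $c'$ to $c$ we attach handles of index $\le d$, which preserves connectedness because the attaching is along spheres of dimension $\lambda - 1 \le d-1$ inside a boundary of dimension $2d - 1 \ge 3 > d - 1$, hence cannot separate. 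Therefore $\{\varphi \ge c\} = X \setminus \interior M$ is connected, and a fortiori $X \setminus M$ (or $X \setminus K$) is connected.

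The main obstacle is making the "handles of index $\le d$ do not disconnect" argument fully rigorous at the level of $\pi_0$, and in particular handling the top end $\{\varphi \ge c'\}$ correctly — one must be a little careful that passing critical points of index exactly $0$ or $1$ cannot occur in a way that spawns new components. Index-$0$ critical points of an exhaustion function are local minima, which lie in compact sublevel sets and hence below $c$ once $c$ is large, so none occur in $\{\varphi > c\}$; index-$1$ handles attach along $S^0$, i.e.\ two points, but since $2d - 1 \ge 3$ the relevant statement is that a $1$-handle attached to a connected boundary keeps the manifold connected, which is automatic. So the real content is just the Andreotti--Frankel index bound plus this elementary handle bookkeeping; since the result is quoted from \cite[page 227]{GR2009}, I would present this as the standard proof and refer there for the fine points.
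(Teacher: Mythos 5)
The paper does not actually prove Theorem~\ref{thm:one-ended}; it only cites \cite[page 227]{GR2009}. So I am comparing your argument against the standard proof and against the closest thing in the paper, namely the Morse-theoretic proof of Proposition~\ref{prop:fundamental groups} (which uses the distance-squared function of an embedding $X \subset \Cb^m$ and its gradient flow rather than an abstract strictly plurisubharmonic exhaustion, but rests on the same Andreotti--Frankel index bound). Your choice of ingredients is the right one, and you correctly identify the key inequality: a critical point has index $\lambda \le d$, so its dual handle has index $2d-\lambda \ge d \ge 2$, and it is exactly the hypothesis $d \ge 2$ that makes this $\ge 2$.

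There is, however, a genuine gap in how the induction over critical levels is organized. You descend from a large regular value $c'$ to $c$, which forces you to know that $\{\varphi \ge c'\}$ is connected for large $c'$ as the base case --- but that statement \emph{is} the theorem (take $K' = \{\varphi \le c'\}$), and the parenthetical justification you offer for it is circular; moreover, descending you attach handles of coindex $2d-\lambda \ge d$, not of index $\le d$ as written, and surjectivity of $\pi_0(\{\varphi \ge c'\}) \to \pi_0(\{\varphi \ge c\})$ only shows the component count does not \emph{increase} going down, which controls nothing. The fix is to run the induction upward from some $c_0 < \min_X \varphi$, where $\{\varphi \ge c_0\} = X$ is connected. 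Crossing a critical value of index $\lambda$, the superlevel set $\{\varphi \ge c - \epsilon\}$ is obtained from $\{\varphi \ge c+\epsilon\}$ by attaching the dual handle of index $\mu = 2d-\lambda \ge 2$ along $D^{\lambda} \times S^{\mu - 1}$, which is \emph{connected} since $\mu - 1 \ge 1$; a handle glued along a connected set neither creates nor merges components, so the number of components of $\{\varphi \ge c\}$ is constant in $c$ across every critical level (and between regular levels the cobordism is a product). Hence $\{\varphi \ge c\}$ is connected for every regular value $c$, which is the statement. Your second paragraph essentially contains this observation ($2d - \lambda \ge 2$ prevents the complement from being disconnected), so the proof is repairable, but as written the base case begs the question and the handle bookkeeping points the wrong way.
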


We also use the following  (probably well known) fact about the homotopy groups of a Stein manifold. 

\begin{proposition}\label{prop:fundamental groups} Suppose that $X$ is a connected Stein manifold with $\dim_{\Cb} X \geq 2$ and $K \subset X$ is compact. If $1 \leq k < \dim_{\Cb} X$, then the inclusion map $X \setminus K \hookrightarrow X$ induces a surjective map $\pi_k(X \setminus K) \rightarrow \pi_k(X)$. 
\end{proposition}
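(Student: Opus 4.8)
The statement asserts that for a connected Stein manifold $X$ with $\dim_{\Cb} X \geq 2$, a compact set $K$, and $1 \leq k < \dim_{\Cb} X$, the inclusion $X \setminus K \hookrightarrow X$ induces a surjection on $\pi_k$. The plan is to reduce this to the classical theorem of Andreotti–Frankel (as generalized by Milnor's Morse-theoretic treatment) that a Stein manifold of complex dimension $n$ has the homotopy type of a CW complex of real dimension at most $n$. First I would fix a smooth strictly plurisubharmonic exhaustion function $\varphi : X \to \Rb$, which exists since $X$ is Stein. After a small perturbation I may assume $\varphi$ is a Morse function, and then the Levi-form positivity forces every critical point to have Morse index at most $n = \dim_{\Cb} X$; this is the heart of the Andreotti–Frankel argument.

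Next I would choose the sublevel sets appropriately. Since $K$ is compact, pick a regular value $c$ large enough that $K \subset \{\varphi < c\}$, and set $X_c := \{\varphi \leq c\}$. Then $X$ deformation retracts onto $X_c$ (by the standard gradient-flow argument, pushing down from $\infty$, using completeness of a suitable metric or just the properness of $\varphi$), so $\pi_k(X_c) \to \pi_k(X)$ is an isomorphism; in particular it suffices to show $\pi_k(X \setminus K) \to \pi_k(X_c)$ is surjective, and for that it suffices to show $\pi_k(X_c \setminus K) \to \pi_k(X_c)$ is surjective, since $X_c \setminus K \subset X \setminus K$. Now $X_c$ is obtained from a neighborhood of $K$ (which retracts onto $\{\varphi \le c'\}$ for a regular value $c'$ with $K \subset \{\varphi < c'\}$) by attaching handles of index at most $n$, corresponding to the critical points of $\varphi$ in $\{c' \le \varphi \le c\}$. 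Dually — and this is the key point — $X_c \setminus \{\varphi < c'\}$ is built from a collar of $\{\varphi = c\}$ by attaching handles of index at least $\dim_{\Rb} X - n = 2n - n = n$, i.e. of index $\geq n$.

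The cleanest way to finish is the following excision/general-position argument. I would show that the pair $(X_c, X_c \setminus K)$ is $n$-connected, i.e. $\pi_j(X_c, X_c \setminus K) = 0$ for $j \leq n$; the long exact sequence of the pair then gives that $\pi_k(X_c \setminus K) \to \pi_k(X_c)$ is surjective for $k \leq n-1$, hence in particular for $1 \le k < n$. To prove the pair is $n$-connected, represent an element of $\pi_j(X_c, X_c \setminus K)$ by a map of $(D^j, S^{j-1})$ into $(X_c, X_c \setminus K)$; since $K$ is compact and — replacing $K$ by a slightly larger compact set with smooth boundary if necessary, or directly by the local structure near $K$ inside the Stein manifold — $K$ is contained in the interior of a compact set whose complement in $X_c$ it is "thin" against in the sense that $K$ has a neighborhood that is a manifold with the "other side" of dimension $2n$, a general-position argument applies: a generic map $D^j \to X_c$ with $j \le n$ can be homotoped rel $\partial$ off any subset that is suitably small. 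Concretely, one takes $K$ inside a chart or a finite union of charts, perturbs the disk to be transverse to $K$ (or to be real-analytic and then off the lower-dimensional locus), and uses $j + \dim_{\Rb} K < \dim_{\Rb} X$ — but here $\dim_{\Rb} K$ can be $2n$, so this naive count fails, which is exactly why one must instead use the Morse-theoretic structure above rather than raw general position against $K$.

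I expect the main obstacle to be precisely this last point: one cannot push disks off $K$ directly because $K$ may be top-dimensional, so the argument genuinely needs the handle decomposition of $X_c$ relative to a neighborhood of $K$ with all handles of index $\geq n$ (equivalently, the cohandle decomposition), after which $(X_c, X_c \setminus K)$ — or rather $(X_c, X_c \setminus N)$ for a suitable neighborhood $N \supset K$ that is a deformation retract of a sublevel-set-like region — is seen to be built by attaching cells of dimension $\geq n$ to $X_c \setminus N$, giving the $n$-connectivity of the pair. So the real work is: (i) upgrade $K$ to a compact set $K'$ with $K \subset \operatorname{int}(K') \subset K'$ such that $K'$ is a deformation retract of $\{\varphi \le c'\}$ for some regular value $c'$ with $\{\varphi \le c'\} \supset K$ (using Theorem~\ref{thm:one-ended} and one-endedness to keep complements connected), noting that replacing $K$ by $K'$ only makes the surjectivity statement stronger since $X \setminus K' \subset X \setminus K$; (ii) run the Morse theory of $\varphi$ on $\{c' \le \varphi \le c\}$ to get handles of index $\le n$ attached to $\{\varphi \le c'\}$, hence cohandles of index $\ge n$; (iii) conclude $n$-connectivity of $(\{\varphi \le c\}, \{c' \le \varphi \le c\})$ and feed the result up through the deformation retractions and inclusions assembled in the first two paragraphs. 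The hypothesis $1 \le k < n$ is then exactly what the index bound $\le n$ delivers.
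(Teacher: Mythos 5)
Your strategy runs on the same engine as the paper's: every critical point of a strictly plurisubharmonic Morse function on a Stein $n$-fold has index at most $n$, and a $k$-sphere with $k<n$ can therefore be pushed past the thick part by a complementary-dimension count. The implementations differ, though. The paper embeds $X$ in $\Cb^m$, takes $f(z)=\norm{z-z_0}^2$, perturbs a given $k$-sphere to be transverse to the stable manifolds $W^+(p)\cong\Rb^{\lambda(p)}$ of the positive gradient flow at the critical points in $K$ (the intersection is empty since $k+\lambda(p)\le (n-1)+n<2n$), and then simply flows the sphere forward until it leaves $K$; no handle decompositions, sublevel sets, or exact sequences appear. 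Your repackaging via dual handles of index $\ge n$ and the long exact sequence of a pair is a legitimate alternative that proves the same connectivity statement.

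There is, however, one step that fails as written. You assert that $X$ deformation retracts onto the compact sublevel set $X_c=\{\varphi\le c\}$ as soon as $K\subset\{\varphi<c\}$, so that $\pi_k(X_c)\to\pi_k(X)$ is an isomorphism. A strictly plurisubharmonic Morse exhaustion of a Stein manifold may have infinitely many critical points with critical values tending to $+\infty$ (for instance, the product of an infinite-genus open Riemann surface with $\Cb$ is a Stein surface with infinitely generated $\pi_1$), and crossing a critical value of index $1$ above level $c$ changes $\pi_1$; so no compact sublevel set is a deformation retract of $X$, and $\pi_k(X_c)\to\pi_k(X)$ need not even be surjective. The repair is easy and stays inside your framework: given a class in $\pi_k(X)$, represent it by a map with compact image and choose the regular value $c$ large enough that both $K$ and that image lie in $\{\varphi<c\}$; then it suffices to homotope the sphere, inside $X_c$, into $X_c\setminus K$, which is exactly what your cohandle argument supplies. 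A smaller point: attaching cells of dimension $\ge n$ makes the relevant pair $(n-1)$-connected rather than $n$-connected ($\pi_n$ of the pair is generally nonzero), but this costs you nothing since you only invoke vanishing of $\pi_k$ of the pair for $k\le n-1$.
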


The following argument is similar to the proof of ~\cite[Lemma 5.2]{HJ1998}. 

\begin{proof} We can assume that $X \subset \Cb^m$ is a closed complex submanifold. Then there exists $z_0 \in \Cb^m$ such that the function $f : X \rightarrow \Rb$ given by 
$$
f(z) = \norm{z-z_0}^2
$$
is Morse (see~\cite[Theorem 6.6]{M1963}). Further, since $f$ is strictly plurisubharmonic, the index of each critical point is at most $\dim_{\Cb} X$ (see~\cite[Section 7]{M1963}). Let $\Psi_t : X \rightarrow X$ denote the flow associated to the gradient of $f$. Since 
$$
\norm{\nabla f(z)} \leq 2\norm{z-z_0},
$$
Gr\"onwall's inequality implies that the flow exists for all time. 

If $p \in X$ is a critical point of $f$, then 
$$
W^+(p) := \left\{ z \in X : \lim_{t \rightarrow \infty} \Psi_t(z) = p\right\}
$$
is a submanifold diffeomorphic to $\Rb^{\lambda(p)}$ where $\lambda(p)$ is the index of $p$ (see the proof of~\cite[Proposition 2.24]{N2007}). 

Fix $1 \leq k < \dim_{\Cb} X$ and suppose that $\sigma_0 : \mathbb{S}^k \rightarrow X$ is continuous. We claim that $\sigma_0$ is homotopic to a map $\sigma_1 : \mathbb{S}^k \rightarrow X\setminus K$. Let $p_1,\dots, p_k$ denote the critical points of $f$ in $K$. By perturbing $\sigma_0$ we can assume that $\sigma_0$ is transverse to each $W^+(p_1), \dots, W^+(p_k)$. Then, by dimension counting $\sigma_0(\mathbb{S}^k) \cap W^+(p_j) = \emptyset$ for all $1 \leq j \leq k$. Then for $T$ sufficiently large $\sigma_1 = \Psi_T \circ \sigma_0$ has image in $X \setminus K$ and $H(t,\cdot) = \Psi_{tT} \circ \sigma_0$ is a homotopy from $\sigma_0$ to $\sigma_1$. 

So the inclusion map $X \setminus K \hookrightarrow X$ induces a surjective map $\pi_k(X \setminus K) \rightarrow \pi_k(X)$.
\end{proof} 

\subsection{Envelope of holomorphy and Kerner's theorem} 

In this subsection we recall Kerner's theorem. We follow the presentation in \cite{NS2005} and for more details on envelopes of holomorphy we refer the reader to~\cite[Chapter 1, Section G]{GR2009}. In this subsection, we also assume that all manifolds are connected. 

Let $D$ be a Riemann domain over a Stein manifold $X$, that is $D$ is a complex manifold and there exists a locally biholomorphic map $p_D: D \rightarrow X$ (which is not necessarily onto). There is a maximal domain $E(D)$ over $X$, called the {\sl envelope of holomorphy} of $D$, such that every holomorphic function on $D$ extends to a holomorphic function on $E(D)$. More precisely, the \emph{envelope of holomorphy} of $D$ is a pair $(E(D), \alpha_D)$ where $p_{E(D)}: E(D) \rightarrow X$ is a Riemann domain over $X$ and $\alpha_D : D \rightarrow E(D)$ is a locally biholomorphic map such that
\begin{enumerate}
\item $p_D = p_{E(D)} \circ \alpha_D$, 
\item for every holomorphic function $f : D  \rightarrow \Cb$, there exists a holomorphic function $F: E(D) \rightarrow \Cb$ such that $F \circ \alpha_D = f$,
\item if $(G, \beta)$ is another pair satisfying these properties, then there exists a locally biholomorphic map $f : G \rightarrow E(D)$ such that $f \circ \beta = \alpha_D$ and $p_{E(D)} =p_G \circ f$.  
\end{enumerate}
We also note that since $X$ is Stein, $E(D)$ is also Stein~\cite{R1963}. 

We will use the following extension result (see Theorem 2.9 in \cite{NS2005}).

\begin{lemma}\label{lem:ext}
Suppose that $X$ and $Y$ are Stein manifolds. If $D$ is a Riemann domain over $X$ and $f : D \rightarrow Y$ is a locally biholomorphic map, then there exists a locally biholomorphic map  $F : E(D) \rightarrow Y$ such that $F \circ \alpha_D = f$.
\end{lemma}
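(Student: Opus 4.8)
The plan is to reduce everything to the classical fact encoded in property~(2) of the envelope of holomorphy---that holomorphic \emph{functions} extend from $D$ to $E(D)$---applied after realizing $Y$ inside an affine space.

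First, since $Y$ is Stein, I would embed it as a closed complex submanifold of $\Cb^N$ for some $N$, and write $f=(f_1,\dots,f_N)$ with each $f_j\in\Oc(D)$. By property~(2), each $f_j$ extends to a function $F_j\in\Oc(E(D))$, and I set $F=(F_1,\dots,F_N)\colon E(D)\to\Cb^N$, so that $F\circ\alpha_D=f$. To see that $F$ takes values in $Y$, I would use Cartan's Theorem~A: the ideal sheaf of $Y$ in $\Cb^N$ is generated by finitely many $g_1,\dots,g_r\in\Oc(\Cb^N)$, so that $Y=\{g_1=\dots=g_r=0\}$; each $g_i\circ F\in\Oc(E(D))$ then vanishes on the non-empty open set $\alpha_D(D)$ because $f(D)\subseteq Y$, and since $E(D)$ is connected the identity principle forces $g_i\circ F\equiv 0$. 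Hence $F$ is a holomorphic map $E(D)\to Y$ with $F\circ\alpha_D=f$.

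It remains to prove that $F$ is locally biholomorphic, and this is the step I expect to be the real obstacle. Since $p_{E(D)}$, $p_D$ and $f$ are locally biholomorphic, $E(D)$, $X$, $D$ and $Y$ all have the same complex dimension $n$; hence $F$ is locally biholomorphic precisely off the analytic subset $Z:=\{q\in E(D):dF_q\text{ is singular}\}$, and $Z\cap\alpha_D(D)=\emptyset$ because $f=F\circ\alpha_D$ is locally biholomorphic. One then has to show $Z=\emptyset$, and the difficulty is that a proper analytic subset avoiding a fixed open set is not in itself a contradiction: one must use the \emph{maximality} built into the envelope of holomorphy. The route I would take is to observe that, via $f$, the manifold $D$ is also a Riemann domain over the Stein manifold $Y$; let $(\wt D,\iota,q)$ be its envelope of holomorphy over $Y$, so that $q\colon\wt D\to Y$ is locally biholomorphic \emph{by construction}. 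Extending the coordinate functions of $p_D$ exactly as above produces a holomorphic map $\wt p\colon\wt D\to X$ with $\wt p\circ\iota=p_D$. The pairs $(\wt D,\wt p,\iota)$ and $(E(D),F,\alpha_D)$ then each satisfy properties~(1)--(2) for the other's base manifold, and property~(3) applied twice should yield mutually inverse locally biholomorphic maps between $E(D)$ and $\wt D$ intertwining $\alpha_D$ with $\iota$ and $F$ with $q$; since $q$ is locally biholomorphic, so is $F$.

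The delicate point---where I would follow~\cite{NS2005} carefully---is that invoking property~(3) seems to presuppose that $\wt p$ (and $F$) is already locally biholomorphic, i.e.\ that $(\wt D,\wt p)$ is genuinely a Riemann domain over $X$; the way around this apparent circularity is to carry out the comparison at the level of the sheaf-of-germs construction of the envelope, where the projections to the base are locally biholomorphic by fiat. As an alternative to this last step, one can describe $Z$ as the common zero locus on $E(D)$ of the finitely many global holomorphic $n$-forms $F^{*}\rho_I$, where $\rho_I$ denotes the restriction to $Y$ of $dz_{i_1}\wedge\dots\wedge dz_{i_n}$ and $I=\{i_1<\dots<i_n\}$ runs over the $n$-element subsets of $\{1,\dots,N\}$; the forms $\rho_I$ have no common zero on $Y$, so the $F^{*}\rho_I$ have none on $\alpha_D(D)$, and a non-empty $Z$ should then be ruled out by manufacturing from these forms a function on $D$ that fails to extend to $E(D)$, contradicting the defining property of the envelope.
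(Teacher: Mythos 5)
The paper gives no proof of this lemma---it is imported verbatim from \cite{NS2005} (Theorem 2.9 there)---so there is no internal argument to compare against, and your proposal has to stand on its own. Its first step is correct and complete: embedding $Y$ as a closed submanifold of $\Cb^N$, extending the components of $f$ by property (2), and killing the functions cutting out $Y$ via the identity principle on the connected manifold $E(D)$ does produce a holomorphic $F\colon E(D)\to Y$ with $F\circ\alpha_D=f$ (you do not even need finitely many generators of the ideal sheaf; the common zero locus of \emph{all} global functions vanishing on $Y$ is already $Y$). The genuine gap is that the remaining step---local biholomorphicity of $F$---is the entire content of the lemma, and neither of your two routes is actually carried out. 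In Route A the circularity you flag is real, and your proposed escape does not address it: the sheaf-of-germs constructions over $X$ and over $Y$ are two \emph{different} spaces, each with a locally biholomorphic projection onto its own base ``by fiat,'' and identifying them is precisely the theorem; property (3) cannot be invoked in either direction until one already knows that $F$, respectively $\wt{p}$, is locally biholomorphic. What closes Route A is Rossi's intrinsic description of the envelope (it is the spectrum of the algebra $\Oc(D)$, hence independent of the chosen Stein base), equivalently the fact that a Stein manifold is recovered functorially from its function algebra: both envelopes are Stein, and both restriction maps $\Oc(E(D))\to\Oc(D)$ and $\Oc(\wt D)\to\Oc(D)$ are isomorphisms, which forces a biholomorphism between them intertwining $\alpha_D$ with $\iota$ and $F$ with your projection $q\colon\wt D\to Y$. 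You would need to state and use that input explicitly.

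Route B, where you came closest, can in fact be finished; here is the missing maximality argument. In charts furnished by $p_{E(D)}$ and local coordinates on $Y$, the set $Z=\{w\in E(D): dF_w \text{ singular}\}$ is locally the zero set of the Jacobian determinant of $F$, hence a locally principal analytic hypersurface disjoint from $\alpha_D(D)$. Suppose $Z\neq\emptyset$. Then $E(D)\setminus Z$ is locally Stein inside the Stein manifold $E(D)$, hence Stein by Docquier--Grauert; it is also connected (complement of a proper analytic subset) and contains $\alpha_D(D)$. Pick $w_0\in Z$ and $w_n\to w_0$ with $w_n\in E(D)\setminus Z$; this sequence is closed and discrete in $E(D)\setminus Z$, so by Cartan's Theorem B there is $g\in\Oc(E(D)\setminus Z)$ with $g(w_n)=n$. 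Then $g\circ\alpha_D\in\Oc(D)$ extends by property (2) to some $G\in\Oc(E(D))$; $G=g$ on the open set $\alpha_D(D)$, hence on all of $E(D)\setminus Z$ by the identity principle, so $g(w_n)=G(w_n)\to G(w_0)$, contradicting $g(w_n)=n$. Without an argument of this kind (or the intrinsic characterization above), the proposal establishes only the easy half of the lemma.
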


Given a Riemann domain $D$ over a Stein manifold $X$, let $\pi_D : \widetilde{D} \rightarrow  D$ denote the universal cover of $D$. Notice that, by definition, $\widetilde{D}$ is also a Riemann domain over $X$ and $\pi_D$ is a locally biholomorphic map. Kerner established the following remarkable connection between universal covers and envelopes of holomorphy.

\begin{theorem}\cite{K1961}\label{thm:ker}
If $D$ is a Riemann domain over a Stein manifold $X$, then the universal cover of $E(D)$ is biholomorphic to $E(\widetilde{D})$. Moreover, under this identification we have the following commutative diagram

\begin{center}
\begin{tikzcd}
\widetilde{D} \arrow[d, "\pi_D"'] \arrow[rr, "\alpha_{\widetilde{D}}"] & &  E(\widetilde{D})  \arrow[d, "\pi_{E(D)}"]\\
D \arrow[rr, "\alpha_D"] & & E(D)
\end{tikzcd}
\end{center}

\end{theorem}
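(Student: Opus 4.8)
The plan is to identify the universal cover of $E(D)$ with the envelope of holomorphy of $\widetilde{D}$, reading off the commutative diagram along the way. The commutative square itself is immediate: the map $\alpha_D \circ \pi_D \colon \widetilde{D} \to E(D)$ is locally biholomorphic, so --- since $E(D)$ is Stein --- Lemma~\ref{lem:ext} produces a locally biholomorphic $\pi_{E(D)} \colon E(\widetilde{D}) \to E(D)$ with $\pi_{E(D)} \circ \alpha_{\widetilde{D}} = \alpha_D \circ \pi_D$. Everything then reduces to showing that this $\pi_{E(D)}$ is the universal covering map. To that end, let $q \colon \widehat{E} \to E(D)$ be the universal covering; since an unramified cover of a Stein manifold is Stein, $\widehat{E}$ is Stein, and $p_{E(D)} \circ q$ exhibits it as a Riemann domain over $X$. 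Because $\widetilde{D}$ is simply connected, $\alpha_D \circ \pi_D$ lifts through $q$ to a locally biholomorphic $\psi \colon \widetilde{D} \to \widehat{E}$ with $q \circ \psi = \alpha_D \circ \pi_D$. I would then prove that $(\widehat{E},\psi)$ satisfies the defining property of $E(\widetilde{D})$; granting that, the universal property yields a biholomorphism $E(\widetilde{D}) \cong \widehat{E}$ taking $\alpha_{\widetilde{D}}$ to $\psi$, and under this identification $\pi_{E(D)}$ and $q$ agree (both are locally biholomorphic over $X$ and restrict to $q \circ \psi$ on the image of $\widetilde{D}$, so they coincide by the identity principle), which is the theorem.

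To check that $(\widehat{E},\psi)$ is the envelope of $\widetilde{D}$ I would use the following characterization: a Stein Riemann domain $W$ over $X$ equipped with a locally biholomorphic map $\iota \colon \widetilde{D}\to W$ over $X$ is the envelope of holomorphy of $\widetilde{D}$ as soon as the restriction $\iota^* \colon \Oc(W)\to\Oc(\widetilde{D})$ is bijective. Indeed, bijectivity lets one apply the universal property of $E(\widetilde{D})$ to get a map $W\to E(\widetilde{D})$ over $X$ that is an isomorphism on global functions, and a locally biholomorphic map between Stein manifolds that is bijective on global holomorphic functions is a biholomorphism --- it is injective and open because the ring of functions separates points, and its image is closed because otherwise holomorphic convexity of the (Stein) image would produce a function blowing up at a boundary point, contradicting surjectivity. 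Applying this with $W=\widehat{E}$ and $\iota=\psi$: injectivity of $\psi^*$ is automatic (the identity principle, as $\widehat{E}$ is connected and $\psi$ is open), so the whole theorem comes down to surjectivity, i.e.\ to the assertion that \textbf{every holomorphic function on $\widetilde{D}$ extends to $\widehat{E}$}.

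This extension statement is the heart of Kerner's theorem and the step I expect to be the real obstacle. I would attack it in two stages. First, one needs that $\pi_1(D)\to\pi_1(E(D))$ is injective --- equivalently, that the deck group of $\widetilde{D}\to D$ embeds in that of $q$, so that $\psi$ does not collapse any deck transformation and no function is lost; this is plausible because forming the envelope only adjoins points of complex codimension $\geq 2$, which is invisible to $\pi_1$, but establishing it for a general Riemann domain over $X$ is itself part of the work. Second, granted this, a function $f\in\Oc(\widetilde{D})$ extends to $\widehat{E}$ by a monodromy argument: since $\widehat{E}$ is simply connected, it suffices to continue the germs of $f$ analytically along every path in $\widehat{E}$ issuing from $\psi(\widetilde{D})$; such a path projects under $q$ to a path in $E(D)$ along which all functions in $\Oc(D)$ continue, and the compatibility of the two fundamental groups forces the accompanying continuation of $f$ to be single-valued, since any candidate monodromy loop is nullhomotopic in $E(D)$ and hence lifts to a loop in $\widetilde{D}$, where $f$ is already defined. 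All the remaining ingredients --- the construction of $\pi_{E(D)}$, the passage to $\widehat{E}$, and the final identification --- are formal, using only the universal property of envelopes, the identity principle, and the two standard facts that envelopes of holomorphy over Stein manifolds are Stein and that unramified covers of Stein manifolds are Stein.
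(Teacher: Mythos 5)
First, a caveat: the paper does not prove this statement — it is quoted from Kerner \cite{K1961}, following the presentation in \cite{NS2005} — so there is no in-paper argument to compare yours against, and I can only assess the proposal on its own terms.

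Your formal skeleton is sound and is the natural way to organize a proof: producing $\pi_{E(D)}$ from Lemma~\ref{lem:ext}, passing to the universal cover $q\colon\widehat{E}\to E(D)$ (Stein, by Stein's theorem on unramified covers of Stein manifolds), lifting to $\psi\colon\widetilde{D}\to\widehat{E}$, and reducing everything to bijectivity of $\psi^{*}\colon\mathcal{O}(\widehat{E})\to\mathcal{O}(\widetilde{D})$ via the universal property; the argument that a locally biholomorphic map of connected Stein manifolds inducing a bijection on global functions is a biholomorphism is also correct. The problem is that this reduction isolates the entire analytic content of the theorem and your sketch of that remaining step does not close it. Your monodromy argument justifies the \emph{existence} of the analytic continuation of a germ of $f\in\mathcal{O}(\widetilde{D})$ along a path in $\widehat{E}$ by appealing to the fact that functions in $\mathcal{O}(D)$ continue along the projected path in $E(D)$; but $f$ lives on $\widetilde{D}$, not on $D$, and $\mathcal{O}(\widetilde{D})$ is strictly larger than $\pi_D^{*}\mathcal{O}(D)$. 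The defining property of $E(D)$ says nothing about continuing germs that do not come from $\mathcal{O}(D)$, so the existence of the continuation (as opposed to its single-valuedness, which is what your monodromy discussion actually addresses) is precisely the assertion to be proved — the argument is circular at its core. Similarly, the injectivity of $\pi_1(D)\to\pi_1(E(D))$, which you rightly flag as necessary (without it $\psi$ collapses fibers and a non-invariant $f$ cannot even be pushed to $\psi(\widetilde{D})$), is essentially a consequence of Kerner's theorem rather than an independent input, and the heuristic offered for it — that the envelope "only adjoins points of complex codimension $\geq 2$" — is false: envelopes typically adjoin open sets (Hartogs figures; indeed in this very paper $E(X\setminus K)=X$ adjoins all of $K$). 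Kerner's actual argument works with the sheaf-theoretic construction of the envelope as the étalé space of simultaneous continuations of $\mathcal{O}(D)$, and is genuinely nontrivial; your proposal correctly locates the difficulty but does not supply the missing idea.
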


\subsection{Symmetric spaces}\label{subsec:symmetric spaces} In this section we recall some properties of symmetric spaces, for more details see~\cite{H2001}.

Given Riemannian manifolds $(M,g)$ and $(N,h)$, a map $f : M \rightarrow N$ is called a \emph{local isometry} if it is a local diffeomorphism and 
$$
g=f^*h
$$
on $M$. If, in addition, $f$ is a diffeomorphism, then $f$ is called an \emph{isometry}. 

A Riemannian manifold $(M,g)$ is called a \emph{locally symmetric space} if for every $p \in M$ there is an open neighborhood $U_p$ of $p$ and a local isometry $s_p : U_p \rightarrow M$ such that $s_p(p) = p$ and $d(s_p)_p = -\id$. If each $s_p$ is an isometry defined on all of $M$ and $M$ is connected, then $M$ is a \emph{(global) symmetric space}. In both cases, the map $s_p$ is called a \emph{geodesic symmetry at $p$}. Any isometry of a connected Riemannian manifold is determined by its value and derivative at a point, so if $s_p : U_p \rightarrow M$ and $\hat{s}_p : \hat{U}_p \rightarrow M$ are geodesic symmetries at a point $p$, then $s_p = \hat{s}_p$ on the connected component of $U_p \cap \hat{U}_p$ containing $p$. So geodesic symmetries in a  locally symmetric space are locally unique and geodesic symmetries in a symmetric space are unique.

Since the isometry group of a symmetric space acts transitively, every symmetric space is complete. Further, every locally symmetric space has a real analytic structure (see~\cite[Chapter iV, Proposition 5.5]{H2001}) and so we will always assume that they are real analytic Riemannian manifolds. 

Given a Riemannian manifold $(M,g)$, let $B(p,r) \subset M$ denote the metric ball of radius $r > 0$ centered at $p \in M$. Recall that $B(p,r)$ is called \emph{normal} if the exponential map $\exp_p$ is defined on $\{ v \in T_p M : \norm{v} < r\}$ and induces a diffeomorphism between this set and $B(p,r)$. We will use the following observation. 

\begin{observation}\label{obs:symmetries on normal balls} If $(M,g)$ is a locally symmetric space and $B(p,r)$ is a normal ball, then we can assume that $s_p$ is defined on $B(p,r)$ and induces an isometry of $B(p,r) \rightarrow B(p,r)$. \end{observation}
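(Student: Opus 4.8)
The plan is to use the local uniqueness of geodesic symmetries together with the fact that a normal ball is the diffeomorphic image of a Euclidean ball under $\exp_p$, on which the geodesic symmetry is easy to describe. First I would recall that, since $(M,g)$ is locally symmetric, there is some open neighborhood $U_p$ of $p$ and a local isometry $s_p : U_p \to M$ with $s_p(p)=p$ and $d(s_p)_p = -\id$. Shrinking if necessary, I may assume $U_p \subset B(p,r)$. The key point is that a local isometry commutes with the exponential map wherever both sides are defined: for $v$ in a small enough ball in $T_pM$ we have $s_p(\exp_p v) = \exp_{s_p(p)}\big(d(s_p)_p v\big) = \exp_p(-v)$. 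Thus on a small normal ball $B(p,\varepsilon)$ the map $s_p$ is literally $\exp_p \circ (-\id) \circ \exp_p^{-1}$.

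Next I would use this formula to \emph{define} a candidate extension $\sigma : B(p,r) \to M$ by $\sigma := \exp_p \circ (-\id) \circ \exp_p^{-1}$, which makes sense precisely because $B(p,r)$ is normal, so $\exp_p$ is a diffeomorphism from $\{v \in T_pM : \norm{v} < r\}$ onto $B(p,r)$ and this set is symmetric under $v \mapsto -v$. By construction $\sigma$ is a diffeomorphism of $B(p,r)$ onto itself with $\sigma(p)=p$, $d\sigma_p = -\id$, and $\sigma^2 = \id$. It remains to check that $\sigma$ is an isometry of $B(p,r)$, and that it agrees with the germ $s_p$ near $p$ (so it genuinely is ``the'' geodesic symmetry). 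The agreement near $p$ follows from the commutation formula above. For the isometry property, I would argue by real-analyticity: $M$ carries a real-analytic structure in which $g$ is real-analytic, $\exp_p$ is real-analytic, hence $\sigma$ is a real-analytic map, and $\sigma^*g - g$ is a real-analytic tensor field on the connected set $B(p,r)$ which vanishes on the open subset where $\sigma$ coincides with the local isometry $s_p$; therefore it vanishes identically. Hence $\sigma$ is an isometry of $B(p,r)$, and renaming $\sigma$ as $s_p$ gives the claim.

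The main obstacle — really the only subtlety — is passing from ``$s_p$ is an isometry on a tiny neighborhood of $p$'' to ``the map $\exp_p\circ(-\id)\circ\exp_p^{-1}$ is an isometry on the whole normal ball.'' Pointwise on a normal ball one cannot directly invoke transitivity of an isometry group (we are not assuming $M$ is globally symmetric), so the cleanest route is the real-analytic continuation argument just described; alternatively one can note that each point $q \in B(p,r)$ lies on a minimizing geodesic from $p$ and use the differential equation for Jacobi fields, but the analyticity argument is shorter and uses only facts already recorded in the excerpt. One should also double-check the elementary fact that a normal ball $B(p,r)$ equals $\exp_p(\{\norm{v}<r\})$ as sets and that this radial set is invariant under $v\mapsto -v$, which is immediate from the definition of ``normal.''
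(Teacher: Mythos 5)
Your proposal is correct and follows essentially the same route as the paper: define $f:=\exp_p\circ(-\id)\circ\exp_p^{-1}$ on the normal ball, observe it agrees with the germ of $s_p$ near $p$ (the paper phrases this via $s_p$ mapping geodesics to geodesics, you via the commutation of local isometries with $\exp$), and then propagate $f^*g=g$ from a neighborhood of $p$ to all of $B(p,r)$ by real analyticity. No substantive differences.
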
 

\begin{proof} Let
$$
f := \exp_p \circ (-\id_{T_pM}) \circ \exp_p^{-1} : B(p,r) \rightarrow B(p,r).
$$
Then $f$ is a real analytic diffeomorphism, since $(M,g)$ is a real analytic Riemannian manifold, with $f(p)=p$. Further, since $s_p$ maps geodesics to geodesics, $s_p = f$ in a neighborhood of $p$. In particular, $f^* g = g$ in a neighborhood of $p$. Since $f$ is real analytic, then $f^* g = g$ on all of $B(p,r)$ and so $f$ is an isometry. Since $d(\exp_p)_0 = \id$ under the identification $T_0 T_p M \simeq T_p M$, we have $d(f)_p = -\id_{T_pM}$. So we can assume that $s_p = f$. 
\end{proof} 

A Riemannian manifold $(M,g)$ is a \emph{Hermitian (locally) symmetric space} if $(M,g)$ is a (locally) symmetric space, $g$ is a Hermitian metric, and each geodesic symmetry is holomorphic. The proof of ~\cite[Chapter VIII, Proposition 4.1]{H2001} shows that in this case $g$ is actually a K\"ahler metric.

\section{Proof of Theorem~\ref{thm:metric_hartog's}}

Suppose that $X$ is a Stein manifold with $\dim_{\Cb} X \geq 2$, $K \subset X$ is a compact subset where $X \setminus K$ is connected, and $g_0$ is a locally symmetric Hermitian metric on $X \setminus K$ which is complete at infinity.  

Let $U: = X \setminus K$. 

\begin{lemma}\label{lem:local transitivity} If $z_1, z_2 \in U$, then there is an open neighborhood $\Oc$ of $z_1$ in $U$ and a holomorphic local isometry $f : \Oc \rightarrow U$ with $f(z_1) = z_2$. \end{lemma}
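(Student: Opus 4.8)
The plan is to exploit the fact that $U = X \setminus K$ is connected together with the local structure of a locally symmetric Hermitian metric. The key point is that locally symmetric spaces are real analytic and their germs of local isometries can be analytically continued along paths; since holomorphicity of the geodesic symmetries forces the continued isometries to remain holomorphic, I obtain a holomorphic local isometry sending $z_1$ to $z_2$.

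First I would reduce to the case where $z_1$ and $z_2$ are close. Fix a path $\gamma : [0,1] \to U$ from $z_1$ to $z_2$ (possible since $U$ is connected). Cover $\gamma$ by finitely many normal balls $B(\gamma(t_i), r_i)$, so it suffices to produce holomorphic local isometries between overlapping normal balls and compose them. Concretely, for points $p, q$ lying in a common normal ball, I would build a local isometry moving $p$ to $q$ as follows: using Observation~\ref{obs:symmetries on normal balls}, the geodesic symmetries $s_p$ and $s_q$ are holomorphic isometries of the normal ball; then $s_q \circ s_p$ is a holomorphic local isometry, and (a standard fact about symmetric spaces, sometimes called the "transvection" along the geodesic from $p$ to $q$) it maps $p$ to the point obtained by reflecting $p$ through $q$, i.e. the endpoint of the geodesic extending $[p,q]$ to twice its length. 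By choosing $q$ to be the midpoint of the geodesic segment from $p$ to the desired target, $s_q \circ s_p$ moves $p$ exactly to that target. So between any two sufficiently close points there is a holomorphic local isometry of $U$ into $U$ carrying one to the other. (Alternatively, one invokes that a locally symmetric space is locally homogeneous via holomorphic isometries, which is immediate once one knows geodesic symmetries are holomorphic.)

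Chaining these local isometries along the cover of $\gamma$ gives a holomorphic local isometry $f$ defined on a neighborhood $\Oc$ of $z_1$ in $U$, with values in $U$, such that $f(z_1) = z_2$ — which is exactly the assertion. The composition is holomorphic because each factor is, it is a local isometry because each factor is and $g_0 = f^* g_0$ is preserved under composition, and shrinking $\Oc$ if necessary ensures the whole composite is defined and lands in $U$.

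The main obstacle is the bookkeeping in the chaining step: ensuring that the domains of the intermediate local isometries can be shrunk consistently so that the full composition is defined on a single open neighborhood of $z_1$ and stays inside $U$ throughout. This is routine — one works backward from $z_2$, successively pulling back the (open) domain of definition through each local isometry and intersecting with the previous normal ball — but it requires care to state cleanly. A secondary point worth making explicit is the identification of $s_q \circ s_p$ with the transvection taking $p$ to its reflection through $q$; this is a local computation in normal coordinates (the two symmetries act on the geodesic through $p$ and $q$ by $x \mapsto 2p - x$ and $x \mapsto 2q - x$ respectively, whose composite is $x \mapsto x + 2(q-p)$), so it presents no real difficulty.
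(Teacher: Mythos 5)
Your proposal is correct and takes essentially the same route as the paper: fix a path from $z_1$ to $z_2$, cover it by normal balls, and chain holomorphic geodesic symmetries based at midpoints of short geodesic segments, with the same backward-pullback bookkeeping for the domain $\Oc$. The only cosmetic difference is that you use the transvection $s_q \circ s_p$ where the paper uses the single midpoint symmetry $s_q$ alone (which already sends $p$ to its reflection through $q$, since $s_p$ fixes $p$).
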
 

\begin{proof} Fix a smooth curve $\sigma : [0,1] \rightarrow U$ with $\sigma(0)=z_1$ and $\sigma(1) = z_2$. Then fix $C > 1$ such that 
$$
\dist(\sigma(s), \sigma(t)) \leq C\abs{t-s}
$$
for all $s,t \in [0,1]$. By compactness, there exists $r_0 > 0$ such that $B(z,r_0)$ is a normal ball for all $z \in \sigma([0,1])$. Then there exists $0 < r < r_0$ such that $B(z,r)$ is a normal ball for all 
$$
z \in \bigcup_{w \in \sigma([0,1])} \overline{B(w,r_0/2)}. 
$$
Fix a partition 
$$
0 =  t_1 < \dots < t_{m+1} = 1
$$
such that $\abs{t_{j+1}-t_j} < \frac{2r}{C}$ for $1 \leq j \leq m$. Then for $1 \leq j \leq m$, let $p_j$ denote the midpoint of the unique geodesic joining $\sigma(t_j)$ to $\sigma(t_{j+1})$. By construction, $B(p_j,r)$ is a normal ball and 
$$
\sigma(t_j), \sigma(t_{j+1}) \in B(p_j,r). 
$$
Let $s_j : B(p_j,r) \rightarrow M$ be the geodesic symmetry at $p_j$ (see Observation~\ref{obs:symmetries on normal balls}). Then $s_j(\sigma(t_j)) = \sigma(t_{j+1})$. Finally the holomorphic local isometry
\begin{align*}
f  := s_m \circ s_{m-1} \circ \cdots \circ s_1 : \Oc \rightarrow M
\end{align*}
defined on 
$$
\Oc  := \bigcap_{k=1}^m (s_k \circ s_{k-1} \circ \cdots \circ s_1)^{-1}( B(p_{k+1},r))
$$
maps $z_1$ to $z_2$. 
\end{proof} 

\begin{lemma}\label{lem:local_model} There exists a simply connected Hermitian symmetric space $(M,h)$ such that: if $z \in U$, then there exist a neighborhood $\Oc_z$ of $z$ in $U$ and a holomorphic local isometry $\phi_z : \Oc_z \rightarrow M$. 
\end{lemma}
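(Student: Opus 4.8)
The plan is to reduce the lemma to producing a single local embedding at one point, and then to build $(M,h)$ from the infinitesimal data of $g_0$ there. For the reduction, suppose we have fixed a point $z_0 \in U$, a simply connected Hermitian symmetric space $(M,h)$, and a holomorphic local isometry $\phi_{z_0} : \Oc_{z_0} \to M$ on a neighborhood $\Oc_{z_0}$ of $z_0$. Given an arbitrary $z \in U$, Lemma~\ref{lem:local transitivity} applied to the pair $(z,z_0)$ yields a neighborhood $\Oc$ of $z$ and a holomorphic local isometry $f : \Oc \to U$ with $f(z) = z_0$; after replacing $\Oc$ by $\Oc \cap f^{-1}(\Oc_{z_0})$, the composition $\phi_z := \phi_{z_0} \circ f : \Oc \to M$ is again a holomorphic local isometry. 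So it suffices to carry out the construction at a single $z_0$.

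\emph{Construction of the model.} Being a locally symmetric Hermitian metric, $g_0$ is in fact K\"ahler (by the remark following the definition of Hermitian locally symmetric space), so $J$ and the curvature tensor $R$ are parallel, and consequently $J$ commutes with every curvature operator $R(u,v)$. Fix $z_0$, set $\mathfrak m := T_{z_0}U$, and let $\hL \subseteq \mathfrak{so}(\mathfrak m)$ be the span of $\{R_{z_0}(u,v):u,v\in\mathfrak m\}$; this is the holonomy Lie algebra at $z_0$, it commutes with $J_{z_0}$, and it contains no nonzero ideal of $\gL := \hL \oplus \mathfrak m$ since it acts faithfully on $\mathfrak m$. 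Equip $\gL$ with the bracket given by that of $\hL$ on $\hL$, by $[\xi,u] := \xi(u)$ for $\xi\in\hL$, $u\in\mathfrak m$, and by $[u,v] := -R_{z_0}(u,v)$ for $u,v\in\mathfrak m$; the Jacobi identity for this bracket amounts exactly to the algebraic identities satisfied by the curvature tensor of a locally symmetric space. Together with the involution that is $+1$ on $\hL$ and $-1$ on $\mathfrak m$ this is an effective orthogonal symmetric Lie algebra, which the standard construction (see~\cite[Ch.~IV--V]{H2001}) integrates to a simply connected symmetric space $(M,h)$ with a base point $o$, a linear isometry $\iota:\mathfrak m\to T_oM$ intertwining the curvature tensors, and geodesic symmetry $s_o$ with $s_o(o)=o$ and $d(s_o)_o=-\id$. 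Transporting $J_{z_0}$ through $\iota$ gives a $G$-invariant parallel $h$-compatible almost complex structure on $M$, which is integrable because it is parallel; since $d(s_o)_o=-\id$ is complex linear and the $(1,1)$-tensor $(ds_o)^{-1}\!\circ J\circ ds_o$ is parallel, it equals $J$, so $s_o$, and likewise every geodesic symmetry of $M$, is holomorphic. Thus $(M,h)$ is a simply connected Hermitian symmetric space.

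\emph{Construction of $\phi_{z_0}$.} Choose the isometry $\iota$ above to be complex linear. For $\epsilon>0$ small enough that $B(z_0,\epsilon)\subseteq U$ is a normal ball and $\exp_o$ is a diffeomorphism on $\{w\in T_oM:\norm{w}<\epsilon\}$, set
$$
\phi_{z_0}:=\exp_o\circ\,\iota\circ\exp_{z_0}^{-1}:B(z_0,\epsilon)\longrightarrow M.
$$
Since $g_0$ and $h$ have parallel curvature and $\iota$ intertwines the curvature tensors at $z_0$ and $o$, the Cartan--Ambrose--Hicks theorem (see~\cite{H2001}) shows that $\phi_{z_0}$ is an isometry onto a normal ball of $M$. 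It is moreover holomorphic: at a point $q=\exp_{z_0}(v)$ the differential $d(\exp_{z_0})_v$ is built from the solution operator of the Jacobi equation along $t\mapsto\exp_{z_0}(tv)$ (which in a symmetric space has constant coefficients) together with parallel transport, and both commute with $J$ because $J$ is parallel and commutes with the curvature operators; the same description holds on $M$ via $\iota$, so $d(\phi_{z_0})_q$ is complex linear for every $q$. Taking $\Oc_{z_0}:=B(z_0,\epsilon)$ completes the proof.

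\emph{Expected main obstacle.} The two steps requiring genuine care are realizing the infinitesimal data $(\hL,\mathfrak m,R_{z_0},J_{z_0})$ as a bona fide simply connected \emph{Hermitian} symmetric space — the closedness of the isotropy subgroup, and the integrability and $h$-compatibility of the invariant complex structure — and upgrading the Cartan--Ambrose--Hicks comparison map from an isometry to a biholomorphism, which is exactly where the parallelism of $J$ and its commuting with the curvature are used.
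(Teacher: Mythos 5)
Your argument is correct in outline but takes a genuinely different route from the paper's, and it contains one misstatement that needs repair. On the comparison: both proofs use Lemma~\ref{lem:local transitivity} to reduce to a single point $z_0$. After that, the paper cites Helgason only for a simply connected \emph{Riemannian} symmetric model $(M,h)$ with a local isometry $\phi$, and then spends most of the proof globalizing the complex structure: it pushes $J_0$ forward to $\phi(\Oc)$, extends it over each $M\setminus C_p$ by parallel transport along minimal geodesics, patches the extensions using connectedness of complements of cut loci and real analyticity, and verifies integrability (via the Nijenhuis tensor), compatibility with $h$, and holomorphy of the geodesic symmetries by analytic continuation of identities that hold on an open set. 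You instead build $M$ as $G/H$ from the orthogonal symmetric Lie algebra $(\hL\oplus\mathfrak m,s)$ determined by $R_{z_0}$; the complex structure then comes essentially for free as the $G$-invariant extension of $J_{z_0}$ (isotropy-invariance is exactly the fact that the curvature operators commute with $J$), it is parallel because invariant tensors on symmetric spaces are parallel, and integrability and holomorphy of the symmetries follow from parallelism. This avoids the cut-locus analysis entirely, at the cost of re-deriving the Lie-theoretic construction (closedness of $H$ in $G$, connectedness of $H$ so that invariance of $J_o$ need only be checked on $\hL$, the Jacobi identity for the bracket) that the paper simply quotes.

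The misstatement is in the holomorphicity of $\phi_{z_0}=\exp_o\circ\iota\circ\exp_{z_0}^{-1}$: you assert that the solution operator of the Jacobi equation commutes with $J$ because $J$ is parallel and commutes with the curvature operators. That is false: $J$ commutes with the endomorphisms $R(u,v)$, but not with the Jacobi operator $w\mapsto R(w,\gamma')\gamma'$; already for constant negative holomorphic sectional curvature one has $R(\gamma',\gamma')\gamma'=0$ while $R(J\gamma',\gamma')\gamma'$ is a nonzero multiple of $J\gamma'$. If the solution operator did commute with $J$, then $\exp_{z_0}$ itself would be holomorphic, which fails even for the ball. The conclusion is nevertheless correct, and the fix is immediate: since $\iota$ intertwines the curvature tensors, it intertwines the two Jacobi solution operators, so these cancel in $d(\phi_{z_0})_q$ and only parallel transports (which do commute with $J$, as $J$ is parallel) and the complex-linear map $\iota$ remain. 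Simpler still: $\phi_{z_0}$ is a local isometry, so $\phi_{z_0}^*J_M$ is a parallel almost complex structure on the normal ball that agrees with $J_0$ at $z_0$ (because $d(\phi_{z_0})_{z_0}=\iota$ is complex linear), hence equals $J_0$ everywhere. With that sentence replaced, the proof goes through.
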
 

This lemma is probably well known, but lacking a reference we provide a proof. 

\begin{proof} Using Lemma~\ref{lem:local transitivity}, it suffices to prove the lemma for a single $z_0 \in U$. By~\cite[Chapter 4, Theorem 5.1, Corollary 5.7]{H2001}, there exist a simply connected symmetric space $(M,h)$, a neighborhood $\Oc$ of $z_0$ in $U$, and a  local isometry $\phi : \Oc \rightarrow M$. We will construct a complex structure on $M$ that makes $(M,h)$ a Hermitian symmetric space and $\phi$ a holomorphic map.

Using~\cite[Chapter V, Proposition 4.2]{H2001} we can decompose 
$$
M = M_c \times M_{nc} \times M_{euc}
$$
where $M_c$ is a symmetric space of compact type, $M_{nc}$ is a symmetric space of non-compact type, and $M_{euc}$ is isometric to a Euclidean space. It is possible for some of these factors to be zero dimensional. 

Given $p \in M$, let $C_p \subset M$ denote the cut locus of $p$ in $M$. We claim that if $p_1, \dots, p_m \in M$, then the set
$$
\bigcap_{j=1}^m (M\setminus C_{p_j}) = M \setminus \bigcup_{j=1}^m C_{p_j}
$$
is connected. If $\dim_{\Rb} M_c = 0$, then $M$ has non-positive sectional curvature. So, in this case, $C_{p} = \emptyset$ for all $p \in M$ and the claim is obviously true. Suppose now that $\dim_{\Rb} M_c > 0$. In this case, if $p=(p_c, p_{nc},p_{euc}) \in M_c \times M_{nc} \times M_{euc}$, then 
$$
C_p = C_{p_{c}} \times M_{nc} \times M_{euc}
$$
where $C_{p_{c}}$ is the cut locus of $p_c$ in $M_c$.  Further, by  \cite[Corollary 3, Theorem 1.1]{T1978}, $C_{p_{c}}$ is a union of finitely many submanifolds of $M_{c}$, each with co-dimension at least two. Hence for any $p_1, \dots, p_m \in M$ the set 
$$
\bigcap_{j=1}^m (M\setminus C_{p_j}) = M \setminus \bigcup_{j=1}^m C_{p_j}
$$
is the complement of finitely many submanifolds, each with co-dimension at least two; hence it is connected. This proves the claim.

Let $J_0$ denote the complex structure on $X$. Recall, from Section~\ref{subsec:symmetric spaces}, that $g_0$ is K\"ahler and hence $J_0$ is invariant under parallel transport. 

By shrinking $\Oc$ we may assume that $\phi$ is a diffeomorphism onto its image. Then consider the complex structure $J = \phi_*J_0$ on $\phi(\Oc)$. Given $p \in \phi(\Oc)$, let $E_p = M \setminus C_p$. Since every point in $E_p$ is joined to $p$ by a unique minimal length geodesic, we can define an almost complex structure $J^p$ on $E_p$ via parallel transporting $J(p) 
\in \End(T_pM)$ along these minimal length geodesics. Since $J_0$ is invariant under parallel transport, we see that $J^p = J$ on $\phi(\Oc)$. Since the metric $h$ is real analytic, so is $J^p$.

Now, if $p,q \in \phi(\Oc)$, then by the remarks above $E_p \cap E_q = M \setminus (C_p \cup C_q)$ is connected and by definition contains $\phi(\Oc)$, so by real analyticity $J^p=J^q$ on $E_p \cap E_q$. Thus 
$$
\bigcup_{p \in \phi(\Oc)} J^p 
$$ 
defines an almost complex structure on $\cup_{p \in \phi(\Oc)} E_p$ which we also denote by $J$. Since $J=\phi_*J_0$ on $\phi(\mathcal O)$, the local isometry $\phi$ is holomorphic on $\mathcal O$.

Next we claim that $M = \cup_{p \in \phi(\Oc)} E_p$. Fix $p_0 \in \phi(\Oc)$ and $q \in M \setminus E_{p_0}$. Since $C_{p_0}$ is a union of submanifolds with positive codimension (again by \cite[Corollary 3, Theorem 1.1]{T1978}) and the map $g \in {\rm Isom}(M) \mapsto g(q) \in M$ is a submersion, there exists $g \in {\rm Isom}(M)$ arbitrarily close to the identity such that $g(q) \in E_{p_0}$. Then $q \in E_{g^{-1}(p_0)}$ and if $g$ is sufficiently close to the identity, then $g^{-1}(p_0) \in \phi(\Oc)$. So $M = \cup_{p \in \phi(\Oc)} E_p$ and hence $J$ is an almost complex structure on $M$.

To show that $J$ is a complex structure on $M$, it suffices by the Newlander-Nirenberg theorem to show for all vector fields $V,W$ on $M$ the associated Nijenhuis tensor 
 $$
N_J(V,W) := [V,W]+J[JV, W] + J[V, JW]-[JV, JW]
$$ 
vanishes identically. One can check that $N_J(V,W)$ is $\Cc^\infty(M)$-linear in $V$ and $W$. Hence if $q \in M$, then $N_J(V,W)(q)$ only depends on $V(q)$ and $W(q)$. Thus $N_J$ induces a function 
$$
N_J^q : T_q M \times T_q M \rightarrow T_q M
$$
defined by $N_J^q(v,w) = N_J(V,W)(q)$, where $V,W$ are any vector fields with $V(q) = v$ and $W(q) = w$. Since $J_0$ is a complex structure on $X$ and $J=\phi_*J_0$ on $\phi(\Oc)$, then $N_J^q \equiv 0$ for all $q \in \phi(\Oc)$. However, since $J$ is real analytic, $N_J^q$ is real analytic in $q$. Hence $N_J^q \equiv 0$ for all $q \in M$.

To show that $(M,h)$ is a Hermitian symmetric space, we need to show that each geodesic symmetry $s_p$ is holomorphic and 
\begin{equation}
\label{eqn:hJ = h}
h(J \cdot , J\cdot) = h.
\end{equation} 
Notice that both sides of Equation~\eqref{eqn:hJ = h} are real analytic and are equal on the open set $\phi(\Oc)$, so they are equal everywhere. Similarly,
$$
d(s_p)_q \circ J(q) - J(s_p(q)) \circ d(s_p)_q
$$
is real analytic in $p, q \in M$. Since it vanishes when $p, q \in \phi(\Oc)$, it must vanish everywhere. So each $s_p$ is holomorphic.  
 \end{proof}

\begin{lemma}\label{lem:local to global} Any holomorphic local isometry between connected open sets in $M$ extends to a global holomorphic isometry of $M$. \end{lemma}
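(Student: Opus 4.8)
The plan is to use real-analytic continuation of local isometries, exploiting that $M$ is a simply connected Hermitian symmetric space (hence complete and real analytic) together with the simple connectivity to rule out monodromy. Let $\psi : \Oc_1 \to \Oc_2$ be a holomorphic local isometry between connected open subsets of $M$, and fix $p_0 \in \Oc_1$ with image $q_0 = \psi(p_0)$.

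First I would observe that since $M$ is a complete real-analytic Riemannian manifold, standard results on analytic continuation of local isometries (Myers-Steenrod / the theory underlying the Cartan-Ambrose-Hicks theorem) guarantee that $\psi$ extends along every path in $M$: given any smooth curve $\gamma : [0,1] \to M$ with $\gamma(0) = p_0$, the germ of $\psi$ at $p_0$ continues to a germ of local isometry along $\gamma$, because at each stage one can propagate the isometry into a normal ball (the exponential map is a real-analytic diffeomorphism there, $M$ is complete so normal balls of a fixed radius exist, and the metric is real-analytic so an isometry agreeing to first order at a point on an overlap agrees on the whole connected overlap). Since $M$ is simply connected, the monodromy theorem then shows that these continuations fit together to give a globally defined real-analytic local isometry $\Psi : M \to M$ with $\Psi = \psi$ near $p_0$; in particular $\Psi$ agrees with $\psi$ on all of $\Oc_1$ by real-analyticity and connectedness of $\Oc_1$. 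The holomorphy of $\Psi$ transfers the same way: $d\Psi \circ J - J \circ d\Psi$ is real-analytic and vanishes near $p_0$, hence vanishes on $M$.

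Next I need $\Psi$ to be a diffeomorphism, not merely a local one. Completeness of $M$ forces $\Psi$ to be a covering map (a local isometry of a complete Riemannian manifold onto another Riemannian manifold of the same dimension is a covering — this is the standard lemma proved via path-lifting using completeness of the source), so $\Psi : M \to M$ is a covering of the connected simply connected manifold $M$, hence a diffeomorphism. Therefore $\Psi$ is a global holomorphic isometry of $M$ restricting to $\psi$ on $\Oc_1$; note this also shows $\psi$ was injective with image $\Psi(\Oc_1) = \Oc_2$ and that $\psi^{-1}$ extends to $\Psi^{-1}$.

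The main obstacle is making the analytic-continuation step clean: one must be careful that the continued germ along a path is independent of the chosen chain of normal balls (this is where real-analyticity of $h$ is used, exactly as in the proof of Lemma~\ref{lem:local_model}), and that the result is path-independent, which is where simple connectivity of $M$ enters via the monodromy theorem. An alternative, perhaps slicker, route avoids explicit continuation: realize $M = G/L$ with $G$ the identity component of $\Isom(M)$; a holomorphic local isometry is in particular a local isometry, and by the Cartan-Ambrose-Hicks-type rigidity for symmetric spaces any local isometry of a (simply connected) symmetric space is the restriction of an element of $G$ — then one only needs to check the extending isometry is holomorphic, again by the real-analyticity argument. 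I would present the analytic-continuation argument as the main proof since it parallels the techniques already established in the preceding lemmas.
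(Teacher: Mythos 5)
Your proposal is correct and follows essentially the same route as the paper: the paper simply cites Helgason (Chapter I, Proposition 11.4) for the fact that a local isometry of a complete, simply connected, analytic Riemannian manifold extends to a global isometry, which is exactly the analytic-continuation/monodromy/covering argument you spell out, and then upgrades the extension to a holomorphic map by noting that $d(F)_p \circ J(p) - J(F(p)) \circ d(F)_p$ is real analytic and vanishes on an open set --- precisely your holomorphy step. The only difference is that you reprove the cited extension result rather than quoting it.
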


\begin{proof} Let $f$ be a local holomorphic isometry between connected open sets in $M$. Since $M$ is a simply connected symmetric space, $f$  extends to  a real analytic isometry $F: M \rightarrow M$, see~\cite[Chapter 1, Proposition 11.4]{H2001}. To show that $F$ is holomorphic, notice that the equation 
$$
d(F)_p \circ J(p) - J(F(p)) \circ d(F)_p
$$
is real analytic with respect to $p \in M$ and vanishes when $p$ is contained in the domain of $f$, an open set. Thus it vanishes everywhere and $F$ is holomorphic. 
\end{proof} 

Let $\Aut(M,h)$ denote the holomorphic isometries of $(M,h)$. Then $\Aut(M,h)$ acts transitively on $M$ (see the discussion in~\cite[p. 372]{H2001} or combine the proof of Lemma~\ref{lem:local transitivity} with Lemma~\ref{lem:local to global}). 

For each $z \in U$, fix $\phi_z:\Oc_z \rightarrow M$ satisfying Lemma~\ref{lem:local_model}. By possibly replacing $\Oc_z$ with a smaller neighborhood we may assume that $\phi_z$ is a biholomorphism onto its image and $\Oc_z$ is convex (i.e. every two points in $\Oc_z$ are joined by a unique minimal length geodesic in $U$ and this geodesic is contained in $\Oc_z$). Notice that if $z_1,z_2 \in U$ and $\Oc_{z_1} \cap \Oc_{z_2} \neq \emptyset$, then $\Oc_{z_1} \cap \Oc_{z_2}$ is connected (by convexity) and the ``transition function'' 
$$
T_{z_2,z_1} := \phi_{z_2} \circ \phi_{z_1}^{-1}|_{\phi_{z_1}( \Oc_{z_1} \cap \Oc_{z_2})}
$$
is a holomorphic local isometry between connected open sets of $M$. Hence by Lemma~\ref{lem:local to global} there exists $\Phi \in \Aut(M,h)$ such that $T_{z_2,z_1}$ is the restriction of $\Phi$. In the language of geometric structures, see~\cite[Section 3.3]{T1997}, this means that the atlas $\{ (\Oc_z, \phi_z)\}_{z \in U}$ is a $(\Aut(M,h),M)$-structure on $U$.

 \begin{lemma} $M$ has no compact factors. \end{lemma}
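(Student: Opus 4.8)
The plan is to argue by contradiction. Suppose $M$ does have a compact factor, and write the de Rham decomposition as $M = M_c \times M'$, where $M_c$ collects the factors of compact type (so $n_c := \dim_{\Cb} M_c \geq 1$) and $M'$ collects the noncompact and Euclidean factors. Since an irreducible symmetric space of compact type is isometric neither to one of noncompact type nor to a Euclidean factor, every element of $\Aut(M,h)$ preserves this splitting, and in particular it preserves the totally geodesic holomorphic foliation of $M$ whose leaves are the slices $M_c \times \{\mathrm{pt}\}$. Transporting this foliation through the charts $\{(\Oc_z, \phi_z)\}$ of the $(\Aut(M,h),M)$-structure on $U$ produces a holomorphic foliation $\Fc$ of $U$ whose leaves are totally geodesic complex submanifolds of $(U, g_0)$, each locally isometric to the compact Hermitian symmetric space $(M_c, h_c)$. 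Because a compact Hermitian symmetric space is K\"ahler--Einstein with positive Einstein constant, $\operatorname{Ric}_{M_c} \geq (2n_c-1)\kappa\, h_c$ for some $\kappa > 0$; hence every leaf $L$ of $\Fc$, with its induced metric, has $\operatorname{Ric}_L \geq (2n_c-1)\kappa$ on unit vectors. Put $D_0 := \pi/\sqrt{\kappa}$ (the Myers diameter bound in real dimension $2n_c$).

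Next I would use that $g_0$ is complete at infinity to choose a leaf that is ``complete enough''. Fix a complete extension $g$ of $g_0$ to $X$, agreeing with $g_0$ off a compact set containing $K$. Then the incompleteness of $(U, g_0)$ is confined to a bounded region and $\dist_{g_0}(z, \partial U) \to \infty$ as $z \to \infty$ in $X$, where $\partial U$ is the ideal boundary added in the metric completion of $(U, g_0)$. Pick $z_0 \in U$ with $\dist_{g_0}(z_0, \partial U) > 2D_0$ and let $L_0$ be the leaf of $\Fc$ through $z_0$. I claim $\exp^{L_0}_{z_0}$ is defined on the closed ball of radius $2D_0$ in $T_{z_0} L_0$: if some unit-speed geodesic $t \mapsto \exp^{L_0}_{z_0}(tv)$ were defined only on $[0,\ell)$ with $\ell \leq 2D_0$, then (the leaves being totally geodesic) this would be an inextensible geodesic of $(U, g_0)$ of finite length; such a geodesic cannot converge to an interior point of $U$, since there it would extend a bit further inside its leaf (totally geodesic leaves, foliated chart), so its endpoint lies on $\partial U$, forcing $\dist_{g_0}(z_0, \partial U) \leq \ell \leq 2D_0$, a contradiction.

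A local Hopf--Rinow argument together with Myers' theorem now forces $L_0$ to be compact. Since $\exp^{L_0}_{z_0}$ is defined on the closed $2D_0$-ball, every point of $L_0$ within distance $2D_0$ of $z_0$ is joined to $z_0$ by a minimizing geodesic of that length; but along any leaf geodesic $\operatorname{Ric} \geq (2n_c-1)\kappa$, so a geodesic of length $> D_0$ has a conjugate point and cannot be minimizing. Hence $L_0$ contains no point at distance in $(D_0, 2D_0)$ from $z_0$, and since $L_0$ is connected, every point of $L_0$ lies within distance $D_0$ of $z_0$; consequently $L_0 = \exp^{L_0}_{z_0}\big(\overline{B(0, D_0)}\big)$ is compact. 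Finally, composing the immersion $L_0 \hookrightarrow U \subset X$ with a proper holomorphic embedding $X \hookrightarrow \Cb^N$ would give a non-constant holomorphic map from the positive-dimensional compact complex manifold $L_0$ to $\Cb^N$, which is impossible. Therefore $n_c = 0$, i.e.\ $M$ has no compact factors.

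I expect the main obstacle to lie in the second step: carefully extracting from the hypothesis ``complete at infinity'' both that there is a point $z_0$ lying at a definite distance from the ideal boundary and the dichotomy that an inextensible finite-length geodesic of $(U, g_0)$ either extends inside its leaf or reaches $\partial U$. Once that is set up, the remainder is a direct application of Myers' theorem (via uniform positivity of Ricci curvature for a compact-type symmetric metric) combined with the standard fact that a Stein manifold admits no compact complex subvariety of positive dimension.
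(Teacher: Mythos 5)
Your proposal is correct and follows essentially the same route as the paper: split off the compact factor, transport it to a totally geodesic holomorphic foliation of $U$, use completeness at infinity to find a base point whose leaf exponential map is defined on a ball of radius exceeding the Myers bound, conclude the leaf is a compact positive-dimensional complex submanifold, and contradict Steinness. The only differences are cosmetic (your local Hopf--Rinow plus connectedness-of-the-distance-image step versus the paper's choice of an intermediate point $q'$ in the compact closed ball, and your phrasing of ``complete at infinity'' via the metric completion).
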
 
 
 \begin{proof} Using~\cite[Chapter VIII, Proposition 4.4]{H2001}, we can decompose 
 \begin{equation}
 \label{eqn:splitting part two}
M = M_c \times M_{nc} \times M_{euc}
\end{equation}
where $M_c$ is a Hermitian symmetric space of compact type, $M_{nc}$ is a Hermitian symmetric space of non-compact type, and $M_{euc}$ is isometric to complex Euclidean space. Suppose for a contradiction that $d:=\dim_{\Rb} M_c > 0$. Then $M_c$ is homogeneous and has positive Ricci curvature, so there exists $\epsilon > 0$ such that ${\rm Ric}(v) \geq \epsilon(d-1)$ for all $v \in T M_c$ with $\norm{v}=1$ (see~\cite[pp. 242 Remark 2]{H2001}).

The group ${\rm Isom}(M,h) \supset \Aut(M,h)$ preserves this splitting, see the proof of~\cite[Chapter VIII, Proposition 4.4]{H2001}, and so 
$$
\Theta_z := \phi_z^*T_z M_{c}
$$ 
defines an integrable subbundle of the tangent bundle $TU$ of $U$. Let $r = \frac{\pi}{\sqrt{\epsilon}}$ and fix $p \in U$ such that $\exp_p$ is defined on $\left\{ v \in T_p X : \norm{v} < r+2 \right\}$ (such a point exists since $g_0$ is complete at infinity). Let $\mathcal{M}$ denote the immersed connected submanifold through $p$ tangent to $\Theta$. Since $M_c \subset M$ is a complex submanifold and each $\phi_z$ is holomorphic, we see that $\mathcal{M}$ is a complex submanifold of $X$.

Endow $\mathcal{M}$ with the restriction of $g_0$ and let $\exp_p^{\mathcal{M}}$ be the associated exponential map at $p$. Since $\phi_z$ is a local isometry and the splitting in Equation~\eqref{eqn:splitting part two} is isometric, 
$$
\exp_p^{\mathcal{M}}(v) = \exp_p(v)
$$
for all $v \in T_p \mathcal{M}$ in the domain of $\exp_p$. In particular,  $\exp_p^{\mathcal{M}} $ is defined on $\left\{ v \in T_p \mathcal{M} : \norm{v} < r+2 \right\}$. Let $B_{\mathcal{M}}(p,r+1)$ denote the metric ball in $(\mathcal{M}, g)$ with radius $r+1$ centered at $p$. Then by the definition of the exponential map,
\begin{equation}
\label{eqn:image of exp map}
\overline{B_{\mathcal{M}}(p,r+1)} = \exp_p\left( \left\{ v \in T_p \mathcal{M} : \norm{v} \leq r+1 \right\}\right)
\end{equation}
is compact. 

Fix $q \in \Mcc$. We claim that $\dist_{\Mcc}(p,q) \leq r$. Suppose not. Since $\overline{B_{\mathcal{M}}(p,r+1)}$ is compact, we can find $q^\prime \in \overline{B_{\mathcal{M}}(p,r+1)}$ where $\dist_{\Mcc}(p,q^\prime) > r$ and 
$$
\dist_{\Mcc}(p,q^\prime) + \dist_{\Mcc}(q^\prime, q) = \dist_{\Mcc}(p,q)
$$
(notice that we do not know that $\Mcc$ is complete,  so we can't assume there is a minimum length geodesic joining $p$ to $q$). Then by Equation~\eqref{eqn:image of exp map} there exists a minimal length geodesic $\sigma$ joining $p$ to $q^\prime$. 

However, by construction $\Mcc$ is locally isometric to $M_c$ and so ${\rm Ric}_{\Mcc}(v) \geq \epsilon(d-1)$ for all $v \in T \Mcc$ with $\norm{v}=1$. Since $\sigma$ has length strictly greater than $r = \frac{\pi}{\sqrt{\epsilon}}$, the standard proof of the  Bonnet-Myers theorem using the index form shows that $\sigma$ is not length minimizing, see ~\cite[Theorem 1.31]{CE2008}. So we have a contradiction and hence $\dist_{\Mcc}(p,q) \leq r$.

Since $q \in \Mcc$ was arbitrary, we get
$$
\Mcc \subset \overline{B_{\mathcal{M}}(p,r+1)}
$$ 
and hence $\Mcc$ is compact. Then $\Mcc$ is a complex, compact, positive dimensional submanifold of $X$. Since $X$ is a Stein manifold,  we have a contradiction. 
  \end{proof}  
  
  \begin{lemma} $M$ is Stein. \end{lemma}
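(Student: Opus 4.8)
The plan is to combine the structure theory of Hermitian symmetric spaces with the previous lemma, which tells us that $M$ has no compact factors. Applying \cite[Chapter VIII, Proposition 4.4]{H2001} once more and discarding the now-trivial compact factor, I would write $M = M_{nc} \times M_{euc}$ as a product of Hermitian symmetric spaces, where $M_{nc}$ is of non-compact type and $M_{euc}$ is, with its complex structure, biholomorphic to a complex Euclidean space $\Cb^m$ (this is part of the content of the cited splitting).

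Since a finite product of Stein manifolds is Stein and $\Cb^m$ is Stein, it remains only to check that $M_{nc}$ is Stein. For this I would invoke the Harish-Chandra realization: a Hermitian symmetric space of non-compact type is biholomorphic to a bounded (indeed convex and circled) symmetric domain in some $\Cb^n$; see \cite[Chapter VIII]{H2001}. Every bounded convex domain $\Omega \subset \Cb^n$ is Stein, since $z \mapsto -\log \dist(z, \partial \Omega) + \norm{z}^2$ is a strictly plurisubharmonic exhaustion function (the distance to the complement of a convex set is concave, and $\norm{z}^2$ is bounded on $\Omega$). Hence $M_{nc}$, and therefore $M$, is Stein.

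Alternatively, and perhaps more in the spirit of the introduction, one can argue directly: $M$ is a simply connected K\"ahler manifold (Hermitian symmetric spaces are K\"ahler) whose sectional curvature is non-positive (symmetric spaces of non-compact type are non-positively curved and the Euclidean factor is flat), so $M$ is Stein by the theorem of Greene-Wu \cite{GW1979} recalled above. I do not anticipate a genuine obstacle in this lemma; the only point warranting a word of justification is that the Euclidean factor in the splitting is the standard $\Cb^m$ as a complex manifold, which is built into the cited decomposition of Hermitian symmetric spaces.
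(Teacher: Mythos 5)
Your main argument is correct and is essentially the paper's proof: the paper likewise invokes the Harish-Chandra embedding to realize $M$ (having no compact factors) as $D \times \Cb^{m}$ with $D$ a bounded (pseudo)convex domain, hence Stein. Your alternative via Greene--Wu \cite{GW1979} is also valid, since $M$ is simply connected, complete, K\"ahler, and non-positively curved once the compact factor is gone, but it is not needed.
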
 
  
  \begin{proof} Since $M$ has no compact factors, by the Harish-Chandra embedding $M$ is biholomorphic to $D \times \Cb^{m_2}$ where $D \subset \Cb^{m_1}$ is a bounded pseudoconvex domain and $m_1,m_2 \geq 0$ (see~\cite[Chapter VIII, Section 7]{H2001}). 
  \end{proof}

Let $\pi_U: \widetilde{U} \rightarrow U$ denote the universal cover of $U$. Then let ${\rm dev} : \wt{U} \rightarrow M$ denote a developing map associated to the  $( \Aut(M,h),M)$-structure on $U$ (see~\cite[Section 3.4]{T1997}). By definition, ${\rm dev}$ is a local diffeomorphism with the property: if $w \in \wt{U}$, then there exists some $\Phi_w \in \Aut(M,h)$ with 
\begin{equation}
\label{eqn: developing map}
\phi_{\pi_U(w)} \circ \pi_{U} = \Phi_w \circ {\rm dev}
\end{equation}
in a neighborhood of $w$. Since $\phi_{\pi_U(w)}$, $\pi_U$, and $\Phi_w$ are all local biholomorphisms, so is ${\rm dev}$. Moreover, $\dev$ is unique up to post composition with an element of $\Aut(M,h)$ (again see~\cite[Section 3.4]{T1997}).

Let $\pi_X: \wt{X} \rightarrow X$ denote the universal cover of $X$. By Hartogs' theorem, the envelope of holomorphy of $U$ is $X$ and so, by Theorem~\ref{thm:ker}, we can identify $E(\widetilde{U})=\widetilde{X}$. Then by Lemma~\ref{lem:ext}, there exists a local biholomorphism $F: \wt{X} \rightarrow M$ such that
\begin{equation}
\label{eqn:defn of F}
F \circ \alpha_{\widetilde{U}} = \dev.
\end{equation}
This is summarized by the following diagram
\begin{center}
\begin{tikzcd}
&  M & \\
\widetilde{U} \arrow[d, "\pi_U"'] \arrow[ru,"\dev"]  \arrow[rr, "\alpha_{\widetilde{U}}"] & &  \widetilde{X} =E(\widetilde{U}) \arrow[lu, "F"'] \arrow[d, "\pi_X"]\\
U \arrow[rr, hook] & & X= E(U)
\end{tikzcd}
\end{center}

Next identify $\pi_1(X)$ (respectively $\pi_1(U)$) with the deck transformation group of the covering $\pi_X: \wt{X} \rightarrow X$ (respectively $\pi_U: \widetilde{U} \rightarrow U$).

\begin{lemma}\label{lem:equivariance} There exists a homomorphism $\rho : \pi_1(X) \rightarrow \Aut(M,h)$ such that 
$$
F \circ \gamma = \rho(\gamma) \circ F
$$
for all $\gamma \in \pi_1(X)$. 
\end{lemma}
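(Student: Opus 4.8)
The plan is to attach to the $(\Aut(M,h),M)$-structure on $U$ its holonomy homomorphism $\hol : \pi_1(U) \to \Aut(M,h)$, to check that it is compatible with $F$ on the open set $\alpha_{\widetilde{U}}(\widetilde{U}) \subset \widetilde{X}$, to spread this compatibility over all of $\widetilde{X}$ by analytic continuation, and finally to push $\hol$ forward along the surjection $\pi_1(U) \twoheadrightarrow \pi_1(X)$ induced by the inclusion $U \hookrightarrow X$.

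First I would recall the holonomy of a developing map (see~\cite[Section 3.4]{T1997}). Given $\gamma \in \pi_1(U)$, the map $\dev \circ \gamma$ is again a developing map for the $(\Aut(M,h),M)$-structure on $U$ --- this follows from $\pi_U \circ \gamma = \pi_U$ and the local relation~\eqref{eqn: developing map}. Since $\dev$ is unique up to post-composition with an element of $\Aut(M,h)$, there is a unique $\hol(\gamma) \in \Aut(M,h)$ with
$$
\dev \circ \gamma = \hol(\gamma) \circ \dev,
$$
uniqueness holding because $\dev$ has nonempty open image and two elements of $\Aut(M,h)$ agreeing on a nonempty open set coincide. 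Comparing the relation for $\gamma_1\gamma_2$ with those for $\gamma_1$ and $\gamma_2$ then shows $\hol$ is a homomorphism.

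Next I would record how $\alpha_{\widetilde{U}}$ behaves under deck transformations. Because $E(U) = X$, the bottom arrow of the commutative diagram is the inclusion $U \hookrightarrow X$, so $\alpha_{\widetilde{U}} : \widetilde{U} \to \widetilde{X}$ is a lift of that inclusion through $\pi_X$; choosing basepoints compatibly and identifying $\pi_1(U)$ and $\pi_1(X)$ with the deck groups of $\pi_U$ and $\pi_X$, standard covering-space theory gives the homomorphism $\iota_* : \pi_1(U) \to \pi_1(X)$ induced by the inclusion, together with
$$
\alpha_{\widetilde{U}} \circ \gamma = \iota_*(\gamma) \circ \alpha_{\widetilde{U}}
$$
for all $\gamma \in \pi_1(U)$. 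By Proposition~\ref{prop:fundamental groups} (applied with $k = 1$) the map $\iota_*$ is surjective; this is the single place where the hypothesis $\dim_{\Cb} X \geq 2$ is used. Combining the last two displays with $F \circ \alpha_{\widetilde{U}} = \dev$, for each $\gamma \in \pi_1(U)$ one computes
$$
\hol(\gamma) \circ F \circ \alpha_{\widetilde{U}} = \hol(\gamma) \circ \dev = \dev \circ \gamma = F \circ \alpha_{\widetilde{U}} \circ \gamma = F \circ \iota_*(\gamma) \circ \alpha_{\widetilde{U}},
$$
so the holomorphic maps $\hol(\gamma) \circ F$ and $F \circ \iota_*(\gamma)$ from $\widetilde{X}$ to $M$ agree on the nonempty open set $\alpha_{\widetilde{U}}(\widetilde{U})$, hence, $\widetilde{X}$ being connected, on all of $\widetilde{X}$ by the identity theorem for holomorphic maps. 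I would then define $\rho : \pi_1(X) \to \Aut(M,h)$ by picking, for each $\delta \in \pi_1(X)$, any $\gamma \in \pi_1(U)$ with $\iota_*(\gamma) = \delta$ and setting $\rho(\delta) := \hol(\gamma)$. This is independent of the choice of $\gamma$: if $\iota_*(\gamma) = \iota_*(\gamma') = \delta$, then $\hol(\gamma) \circ F = F \circ \delta = \hol(\gamma') \circ F$, and since $F$ has nonempty open image and elements of $\Aut(M,h)$ are determined by their value and derivative at a point (cf.\ Section~\ref{subsec:symmetric spaces}), we get $\hol(\gamma) = \hol(\gamma')$. Finally, since $\hol$ is a homomorphism and $\iota_*$ is onto, $\rho$ is a homomorphism, and $F \circ \delta = F \circ \iota_*(\gamma) = \hol(\gamma) \circ F = \rho(\delta) \circ F$ for every $\delta \in \pi_1(X)$, as asserted.

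The one conceptual step, as opposed to routine verification, should be the descent from $\pi_1(U)$ to $\pi_1(X)$: the $(\Aut(M,h),M)$-structure and its holonomy naturally live on $U$, and it is Proposition~\ref{prop:fundamental groups} --- that deleting a compact set from a Stein manifold of dimension at least $2$ leaves a $\pi_1$-surjective inclusion --- that allows the holonomy to be recorded on $\pi_1(X)$. The existence of the holonomy homomorphism, and the two steps passing from agreement on a nonempty open set to agreement everywhere (for holomorphic maps, via the identity theorem, and for isometries, via the rigidity recalled in the preliminaries), are standard.
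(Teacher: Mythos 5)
Your proof is correct and follows essentially the same route as the paper: both use the surjectivity of $\pi_1(U) \to \pi_1(X)$ from Proposition~\ref{prop:fundamental groups}, the uniqueness of the developing map up to post-composition by $\Aut(M,h)$, and the identity theorem to propagate the relation from the open set $\alpha_{\widetilde{U}}(\widetilde{U})$ to all of $\widetilde{X}$. The only cosmetic difference is that you first package the holonomy as a homomorphism on $\pi_1(U)$ and then push it forward, whereas the paper defines $\rho(\gamma)$ directly for each $\gamma \in \pi_1(X)$ by choosing a preimage $\hat{\gamma}$.
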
 

\begin{proof} Fix $\gamma \in \pi_1(X)$. By Proposition~\ref{prop:fundamental groups} there exists $\hat{\gamma} \in \pi_1(U)$ such that $\gamma \circ \alpha_{\widetilde{U}}= \alpha_{\widetilde{U}} \circ \hat{\gamma}$. Notice that $\dev \circ \hat{\gamma}$ is a developing map for the geometric structure on $U$. So by the uniqueness property of the developing map, there exists $\rho(\gamma) \in \Aut(M,h)$ such that $\rho(\gamma) \circ \dev = \dev \circ \hat{\gamma}$. Then using Equation~\eqref{eqn:defn of F} we have 
$$
F \circ \gamma \circ \alpha_{\widetilde{U}} = F \circ \alpha_{\widetilde{U}} \circ \hat{\gamma} = \dev \circ \hat{\gamma} = \rho(\gamma) \circ \dev = \rho(\gamma) \circ F \circ \alpha_{\widetilde{U}}.
$$
Since $\alpha_{\widetilde{U}}$ has open image in $\wt{X}$ and the maps are holomorphic, this implies that $F \circ \gamma = \rho(\gamma) \circ F$. 

Since $F$ has open image in $M$, the element $\rho(\gamma)$ is unique. Then since 
$$
\rho(\gamma_1\gamma_2) \circ F = F \circ \gamma_1 \circ \gamma_2 = \rho(\gamma_1)\rho(\gamma_2) \circ F
$$
we see that $\rho$ is indeed a homomorphism. 
\end{proof} 

Now consider the K\"ahler metric $F^* h$ on $\wt{X}$. By Lemma~\ref{lem:equivariance} this descends to a metric $g$ on $X$. By construction $g$ is locally symmetric and $g=g_0$ on $U = X \setminus K$. Since $g_0$ is complete at infinity,  $g$ is complete.

\section{Proof of Corollary~\ref{cor:universal_cover_is_a_ball}}

Suppose that $X$ is a Stein manifold with $\dim_{\Cb} X \geq 2$, $K \subset X$ is compact, and $g_0$ is a complete K\"ahler metric on $X$ with constant negative holomorphic sectional curvature on $X \setminus K$. Since $X$ is one ended, see Theorem~\ref{thm:one-ended}, we may assume that $X \setminus K$ is connected. 

Using the Cartan-Ambrose-Hicks theorem we observe the following:

\begin{lemma} $g_0$ is locally symmetric on $X \setminus K$. 
\end{lemma}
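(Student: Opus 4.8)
The plan is to apply the Cartan--Ambrose--Hicks theorem, which characterizes locally symmetric spaces among Riemannian manifolds by the condition that the curvature tensor is invariant under parallel transport, equivalently $\nabla R \equiv 0$. Since local symmetry is a purely local property, it suffices to verify $\nabla R_{g_0} \equiv 0$ at each point of $X \setminus K$. So fix $p \in X \setminus K$ and work in a normal neighborhood of $p$.

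First I would recall the structure of the curvature tensor of a Kähler metric of constant holomorphic sectional curvature $c$. Just as in the real space-form case, the full curvature tensor is algebraically determined by $c$, the metric $g_0$, and the complex structure $J_0$: explicitly, for the normalization in use,
\begin{equation*}
R(V,W)Z = \frac{c}{4}\bigl( g_0(W,Z)V - g_0(V,Z)W + g_0(J_0 W, Z) J_0 V - g_0(J_0 V, Z) J_0 W + 2 g_0(J_0 W, V) J_0 Z \bigr).
\end{equation*}
The key point is that $c$ is \emph{constant} on $X \setminus K$ (this is the hypothesis; and even if one only assumed it locally constant, it would be constant on the connected set $X \setminus K$), so the right-hand side is built entirely out of tensors that are parallel: $g_0$ is parallel for the Levi-Civita connection, and $J_0$ is parallel because $g_0$ is Kähler. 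Hence $R$ itself is a parallel tensor, i.e. $\nabla R \equiv 0$ on $X \setminus K$.

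From $\nabla R \equiv 0$, I would then invoke the Cartan--Ambrose--Hicks theorem (or simply Cartan's original local theorem) in the form: a Riemannian manifold with $\nabla R \equiv 0$ is locally symmetric, because for each $p$ the locally-defined map $\exp_p \circ(-\id)\circ \exp_p^{-1}$ on a normal ball pulls back the metric to itself — one checks that a geodesic symmetry preserves the curvature tensor exactly when $\nabla R = 0$, and then the standard Cartan argument comparing solutions of the Jacobi equation shows it is a local isometry. This gives the geodesic symmetry $s_p$ at every $p \in X \setminus K$ with $s_p(p) = p$ and $d(s_p)_p = -\id$, which is precisely the definition of a locally symmetric space given in Section~\ref{subsec:symmetric spaces}.

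I do not anticipate a genuine obstacle here; the lemma is essentially a citation of a classical fact (constant holomorphic sectional curvature Kähler metrics have parallel curvature tensor, hence are locally symmetric). The only mild subtlety worth stating cleanly is why the holomorphic sectional curvature being constant \emph{on $X \setminus K$} is enough: it is the hypothesis that the value is a single constant $c$, so no connectedness argument is even needed, though $X \setminus K$ is connected anyway. One should also note that we only use local symmetry of $g_0$ on the open set $X \setminus K$, which is exactly the input required by Theorem~\ref{thm:metric_hartog's}; combined with the completeness of $g_0$ (hence complete at infinity) and the fact that $g_0$ restricted to $X\setminus K$ is Hermitian, Theorem~\ref{thm:metric_hartog's} will then produce the desired locally symmetric metric on all of $X$, after which one identifies the universal cover using that it must be a Hermitian symmetric space of non-compact type of rank one, i.e. the ball.
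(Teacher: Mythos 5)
Your argument is correct and follows essentially the same route as the paper: both rest on the Cartan--Ambrose--Hicks machinery together with the observation that for a K\"ahler metric of constant holomorphic sectional curvature the full curvature tensor is built from the parallel tensors $g_0$ and $J_0$, so the K\"ahler hypothesis enters exactly where you use it. The one thing the paper records that you omit is that the geodesic symmetry $s_z = \exp_z \circ (-\id) \circ \exp_z^{-1}$ is moreover \emph{holomorphic} (immediate from $\nabla J_0 = 0$ together with $d(s_z)_z = -\id$ commuting with $J_0$), which is needed downstream in the proof of Theorem~\ref{thm:metric_hartog's}, so you should add that one line.
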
 

Since the argument is well known we only sketch the proof. 

\begin{proof}[Proof sketch]  Fix $z \in X \setminus K$ and let $\exp_{z} : T_z X \rightarrow X$ denote the exponential map at $z$ associated to the K\"ahler metric $g_0$. Then fix a normal ball $B(z,r)$ centered at $z$ with $B(z,r) \subset X \setminus K$ and define
$$
s_z : = \exp_z \circ (-\id_{T_z X}) \circ \exp_z^{-1} : B(z,r) \rightarrow B(z,r).
$$ 
The map $s_z$ is a diffeomorphism with $s_z(z) = z$. Since the holomorphic sectional curvature of $g_0$ is constant on $X \setminus K$, the Cartan-Ambrose-Hicks theorem, see Theorem 1.42 in~\cite{CE2008}, implies that $s_z$ is a holomorphic local isometry (we note that this argument requires that parallel transport commutes with complex multiplication and hence it is essential that $g_0$ is a K\"ahler metric instead of just Hermitian). Since $d(\exp_z)_0 = \id$ (under the identification $T_0 T_z X \simeq T_z X$), we have $d(s_z)_z = -\id_{T_z X}$. So $g_0$ is locally symmetric at $z$. 
\end{proof} 

Now, by Theorem~\ref{thm:metric_hartog's}, there exists a complete locally symmetric metric $g$ on $X$ such that $g = g_0$ on $X \setminus K$. Since the universal cover of $(X,g)$ is globally symmetric (hence homogeneous) and $g$ has constant negative holomorphic sectional curvature on $X \setminus K$, we then see that $g$ has constant negative holomorphic sectional curvature on $X$. So a classical result of Hawley~\cite{H1953} and Igusa~\cite{I1954} implies that the universal cover of $X$ is biholomorphic to the unit ball $\Bb$ in $\Cb^{\dim_{\Cb} X}$.

\section{Complex geodesics in strongly pseudoconvex domains} 

In this section we observe that in a strongly pseudoconvex domain and at points sufficiently close to the boundary, the directions where the Kobayashi and Carath\'eodory metrics agree has non-empty interior.

First we fix some notation and terminology. Given a bounded domain $\Omega \subset \Cb^d$ and  $z \in \Omega$, we let
$$
\delta_\Omega(z) := \min\{ \norm{z-x} : x \in \partial \Omega\}
$$ 
denote the distance from $z$ to $\partial \Omega$. If, in addition, $\Omega$ has $\Cc^1$  boundary and $x \in \partial \Omega$, we let  $T_{x} \partial \Omega \subset \Cb^d$ denote the real tangent space to $\partial \Omega$ at $x$ and let $T_{x}^{\Cb} \partial \Omega := T_{x} \partial \Omega \cap i T_{x} \partial \Omega$. Then, when  $\Omega \subset \Cb^d$ has $\Cc^2$  boundary  and $z \in \overline{\Omega}$ is sufficiently close to $\partial \Omega$, there is a unique point $\pi(z) \in \partial \Omega$ such that $\|z-\pi(z)\| = \delta_\Omega(z)$. In this case, we let 
$$
P_z : \Cb^d \rightarrow T_{\pi(z)}^{\Cb} \partial \Omega
$$ 
denote the (Euclidean) orthogonal projection and define $P_z^{\bot} := \Id - P_z$.

Given a domain $\Omega \subset \Cb^d$, a holomorphic map $\varphi : \Db \rightarrow \Omega$ is a \emph{complex geodesic}  if 
$$
K_\Omega(\varphi(z_1), \varphi(z_2)) = K_{\Db}(z_1, z_2)
$$
for all $z_1, z_2 \in \Db$. Then given  $z \in \Omega$ let 
$$E_\Omega(z) \subset  \Cb^d \simeq T_z \Omega$$
denote the set of non-zero vectors $v \in \Cb^d$ where there exist a complex geodesic $\varphi : \Db \rightarrow \Omega$ and a holomorphic map $\rho : \Omega \rightarrow \Db$ with $\varphi(0)=z$, $\varphi^\prime(0)\lambda = v$ for some $\lambda \in \Cb$, and $\rho \circ \varphi = \id_{\Db}$. Equivalently, $E_\Omega(z)$ is the set of non-zero tangent vectors at $z$ where the Kobayashi and Carath\'eodory metrics agree.

\begin{theorem}\label{thm:tangential geodesics} Suppose that $\Omega \subset \Cb^d$ is a bounded strongly pseudoconvex domain with $\Cc^2$ boundary.  There is some $\varepsilon > 0$ such that: if $z \in \Omega$ and $\delta_\Omega(z) < \varepsilon$, then $E_\Omega(z)$ has non-empty interior. \end{theorem}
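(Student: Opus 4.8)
The plan is to pass to a boundary model via scaling. Since $\Omega$ is strongly pseudoconvex with $\mathcal{C}^2$ boundary, for every boundary point $\xi \in \partial\Omega$ the rescalings of $\Omega$ based at points approaching $\xi$ non-tangentially converge (in the local Hausdorff sense) to the Siegel model
$$
\mathcal{H} = \left\{ (w_1, w') \in \Cb \times \Cb^{d-1} : \Imag w_1 > \norm{w'}^2 \right\},
$$
which is biholomorphic to $\Bb$. On $\Bb$ the Kobayashi and Carathéodory metrics agree for \emph{all} tangent directions (every direction is a complex geodesic direction admitting a holomorphic retraction), so $E_{\mathcal{H}}(q) = \Cb^d$ for every $q \in \mathcal{H}$. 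The idea is to transport this ``openness of good directions'' from $\mathcal{H}$ back to $\Omega$ at points near the boundary. The key point making this feasible is that $E_\Omega(z)$ is defined by the \emph{existence} of a complex geodesic through $z$ in direction $v$ together with a holomorphic retraction onto it, and such objects are robust under the appropriate limiting procedure: given a sequence of disks $\varphi_k$ and retractions $\rho_k$ for the rescaled domains $\Omega_k \to \mathcal{H}$, normal families arguments (using that all domains are uniformly bounded in the relevant Kobayashi-geometric sense, or after a further normalization) produce limits $\varphi_\infty$, $\rho_\infty$ for $\mathcal{H}$, and conversely disks and retractions on $\mathcal{H}$ can be perturbed to disks and retractions on $\Omega_k$ for large $k$.

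Concretely, here are the steps I would carry out. First, fix $\xi \in \partial\Omega$ and a sequence $z_k \to \xi$ along the inner normal, and let $\Lambda_k$ be the affine rescalings (the standard parabolic dilations adapted to $\partial\Omega$ at $\xi$) so that $\Lambda_k(z_k) = q_0$ for a fixed base point $q_0 \in \mathcal{H}$ and $\Omega_k := \Lambda_k(\Omega) \to \mathcal{H}$. Second, exhibit for $\mathcal{H}$ an explicit open set — in fact all — of directions $v$ at $q_0$ with a complex geodesic $\psi_v : \Db \to \mathcal{H}$ through $q_0$ in direction $v$ and a holomorphic retraction $r_v : \mathcal{H} \to \psi_v(\Db)$; this is just the statement $E_{\Bb} = \Cb^d$ transported by the biholomorphism $\mathcal{H} \simeq \Bb$. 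Third — the core analytic step — show that for $k$ large, each $\psi_v$ and $r_v$ can be replaced by a genuine complex geodesic and retraction for $\Omega_k$; equivalently, show $E_{\Omega_k}(q_0) \supset$ some fixed open set for all large $k$. Fourth, conjugate back by $\Lambda_k^{-1}$: since $\Lambda_k$ is a biholomorphism, $E_\Omega(z_k) = (\Lambda_k)^{-1}_* E_{\Omega_k}(q_0)$, and a linear image of a set with non-empty interior has non-empty interior. Finally, since $\xi$ was arbitrary and the argument is uniform along normal approach, conclude there is an $\varepsilon > 0$ so that $\delta_\Omega(z) < \varepsilon$ forces $E_\Omega(z)$ to have non-empty interior (if not, a compactness argument produces a sequence $z_k$ approaching some boundary point with $E_\Omega(z_k)$ having empty interior, contradicting the local statement).

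The main obstacle is the third step: turning the existence of complex geodesics and holomorphic retractions on the limit domain $\mathcal{H}$ into their existence on the approximating domains $\Omega_k$, uniformly in $k$. The subtlety is that ``complex geodesic'' and ``retraction'' are not open conditions in any naive sense — a small perturbation of $\psi_v$ need not land in $\Omega_k$, and even if it does it need not be distance-preserving, and the retraction $\rho_k$ must be globally defined and holomorphic on $\Omega_k$. I expect the right tool is the theory of complex geodesics on strongly pseudoconvex domains (Lempert's theory, together with the stability results of Huang and others for solutions to the associated extremal/Riemann–Hilbert problem under $\mathcal{C}^2$ perturbations of the domain): one solves the extremal problem for the direction $v$ directly on each $\Omega_k$, obtaining a complex geodesic $\varphi_k$ with $\varphi_k(0) = q_0$, $\varphi_k'(0) \parallel v$, together with its left inverse $\rho_k$, and then argues that these solve the problem for an open set of $v$ because the limiting solution on $\mathcal{H}$ exists for every $v$ and the implicit-function/continuity properties of the extremal problem are stable. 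The strong pseudoconvexity and $\mathcal{C}^2$ regularity enter precisely here, to guarantee the relevant solutions exist and vary continuously; this is where the references to Lempert and to Huang's work (cited in the introduction as motivating the argument) do the heavy lifting.
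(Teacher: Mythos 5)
Your proposal correctly identifies the shape of the problem and even flags the crucial difficulty yourself, but that difficulty (your ``third step'') is a genuine gap, and the way you propose to close it does not work as stated. On a general strongly pseudoconvex domain $\Omega_k$, solving the Kobayashi extremal problem for a direction $v$ produces an extremal disk, but an extremal disk is not a complex geodesic in the sense required by the definition of $E_\Omega(z)$: one also needs a globally defined holomorphic left inverse $\rho_k : \Omega_k \rightarrow \Db$, i.e.\ a certificate that the Carath\'eodory and Kobayashi metrics agree in that direction. Lempert's theory guarantees the existence of such retractions only for (strongly) \emph{convex} domains; there is no general ``stability of the extremal problem'' statement that manufactures a holomorphic retraction on a non-convex $\Omega_k$ from the existence of one on the Hausdorff limit $\mathcal{H}$. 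The results you allude to (Huang's construction of stationary/extremal disks near the boundary, and the construction of the associated retractions) are exactly what would be needed, but they are proved under $\Cc^3$ (resp.\ higher) boundary regularity, whereas the theorem is claimed for $\Cc^2$ boundaries; the paper explicitly notes this in the remark following the statement.

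The paper closes the gap by a different route. Instead of rescaling $\Omega$ itself, it invokes an embedding theorem: for each $p \in \partial\Omega$ there is a holomorphic embedding $\Phi$ of $\overline{\Omega}$ into a bounded strongly convex domain $D$ with $\Cc^2$ boundary such that $V \cap \Phi(\Omega) = V \cap D$ for some neighborhood $V$ of $\Phi(p)$. On $D$, Lempert's theory applies in full: every direction at every point admits a complex geodesic with a holomorphic retraction $\hat\rho : D \rightarrow \Db$. The rescaling argument (the paper's Lemma on tangential geodesics, which blows up $D$ to the Siegel paraboloid much as you describe) is used only to show that, for $z$ near $p$ and $v$ complex-tangential, the $D$-geodesic $\varphi_{\hat z, v}$ has image contained in $V$, hence in $\Phi(\Omega)$; openness of this set of directions follows from continuity of $(z,v) \mapsto \overline{\varphi}_{z,v}$. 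Pulling back by $\Phi$ then yields a complex geodesic of $\Omega$ together with the retraction $\hat\rho \circ \Phi$, so an open set of directions lies in $E_\Omega(z)$. In short: the existence theory is borrowed from the convex ambient domain where it is unconditional, and the limit-domain analysis is used only for localization --- not, as in your outline, to transfer existence across a Hausdorff limit. If you want to salvage your approach you would need to prove the transfer step, which in effect amounts to reproving the $\Cc^2$ versions of the Huang and retraction theorems; the embedding route avoids this entirely.
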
 

\begin{remark} In the case when $\Omega$ has $\Cc^3$ boundary, uniform estimates on the size of $E_\Omega(z)$ are known. In particular, there exists some $\varepsilon > 0$ such that: if $z \in \Omega$ and $\delta_\Omega(z) < \varepsilon$, then
$$
\left\{ v \in T_z \Omega : \norm{P_z^{\bot}(v)} < \varepsilon \norm{P_z(v)}\right\} \subset E_\Omega(z). 
$$
In this case the existence of $\varphi$ is Theorem 1 in~\cite{H1994b} and the existence of $\rho$ Theorem 1.1 in~\cite{BFW2019}.  Also, see~\cite[Proposition 4.3]{BK1994} for a similar result when $\Omega$ has $\mathcal C^6$ boundary.
\end{remark}

To prove Theorem~\ref{thm:tangential geodesics}, we will use the following embedding theorem which is etablished in~\cite[Theorem 2.6]{BFW2019}, using a result from~\cite{DFW2014} and techniques from~\cite{F1976} (similar embedding theorems can also be found in~\cite[Lemma 5]{H1994b} and~\cite[Lemma 4.4]{BK1994}).

\begin{theorem}\label{thm:embedding theorem} Suppose that $\Omega \subset \Cb^d$ is a bounded strongly pseudoconvex domain with $\Cc^k$ boundary, $k \geq 2$. For any $p \in \partial \Omega$ there exists a bounded strongly convex domain $D \subset \Cb^d$ with $\Cc^k$ boundary and a holomorphic embedding $\Phi : \Omega \rightarrow \Cb^d$ such that 
\begin{enumerate}
\item $\Phi$ extends to a $\Cc^k$ embedding on $\overline{\Omega}$, 
\item $\Phi(\Omega) \subset D$, and 
\item there exists an open neighborhood $V$ of $\Phi(p)$ such that $V \cap \Phi(\Omega) = V \cap D$. 
\end{enumerate}
\end{theorem}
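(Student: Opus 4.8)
The plan is to exploit that strong convexity of the image boundary at the \emph{single} point $\Phi(p)$ is a condition on the $2$-jet of $\Phi$ at $p$. Thus it suffices to (i) produce a local biholomorphism convexifying $\partial\Omega$ near $p$, (ii) realize its $2$-jet at $p$ by a \emph{global} polynomial automorphism of $\Cb^d$, and (iii) cap off the resulting strongly convex boundary patch by a globally strongly convex domain $D$. This is essentially the strategy of Forn\ae ss \cite{F1976} and Diederich--Forn\ae ss--Wold \cite{DFW2014}.

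\textbf{Local convexification.} Fix a $\Cc^k$ defining function $\rho$ for $\Omega$ near $p$. After a complex-affine change of coordinates putting $p$ at the origin and the complex normal in the $z_d$-direction, Narasimhan's lemma—absorbing the holomorphic part of the quadratic Taylor term of $\rho$ by a local biholomorphism and using that the Levi form is positive definite—produces a local biholomorphism $\psi$ defined near $p$ for which $\psi(\partial\Omega)$ is strongly convex in a neighborhood of $\psi(p)$. The key structural point is that whether $\psi(\partial\Omega)$ is strongly convex at $\psi(p)$ depends only on the $2$-jet $j^2_p\psi = (\psi(p),\, d\psi_p,\, \tfrac12 D^2\psi_p)$ and the (fixed) $2$-jet of $\rho$.

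\textbf{Globalize by jet realization.} Because $d\geq 2$, the prescribed $2$-jet can be realized by a global automorphism. Write $\Psi = A\circ S$, where $A(z) = \psi(p) + d\psi_p(z-p)$ is affine (invertible since $d\psi_p\in \GL_d(\Cb)$) and $S$ is a composition of finitely many unipotent holomorphic shears fixing $p$ with $dS_p = \Id$ and prescribed quadratic part, chosen so that $j^2_p\Psi = j^2_p\psi$; such shears exist by the standard jet-realization in $\Aut(\Cb^d)$ (Anders\'en--Lempert theory). Then $\Phi := \Psi|_{\overline\Omega}$ is a holomorphic embedding, $\Cc^k$ up to the boundary since $\Psi$ is a polynomial diffeomorphism of $\Cb^d$ and $\partial\Omega$ is $\Cc^k$, establishing (1). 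As $j^2_p\Phi = j^2_p\psi$, the hypersurface $\Phi(\partial\Omega)$ has the same second fundamental form at $\Phi(p)=\psi(p)$ as $\psi(\partial\Omega)$, so by continuity $\Phi(\partial\Omega)$ is strongly convex on some neighborhood $V_0$ of $\Phi(p)$, with $\Phi(\Omega)$ on its convex side.

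\textbf{Construction of $D$ and the main obstacle.} On $V_0$ the patch $\Sigma := \Phi(\partial\Omega)\cap V_0$ is a strongly convex $\Cc^k$ hypersurface; choose $V \Subset V_0$ containing $\Phi(p)$ and note that the compact set $\Phi(\overline\Omega)$ lies at positive distance from $\Sigma$ outside $V$. Interpolating defining functions, one glues $\Sigma$ (kept unchanged on $V$) \emph{outward} to a large sphere enclosing $\Phi(\overline\Omega)$, producing a bounded strongly convex $\Cc^k$ domain $D$ with $\partial D\cap V = \Sigma\cap V$, $\Phi(\Omega)\subset D$, and $V\cap\Phi(\Omega)=V\cap D$, giving (2) and (3). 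The two technical points are exactly where the real work lies. First, the jet-realization must produce an honest automorphism of $\Cb^d$—not merely a germ—with the prescribed $2$-jet, which is precisely where the shear machinery and the hypothesis $d\geq 2$ enter. Second, the convex gluing must keep the interpolated surface strongly convex: a convex combination $\chi\rho_\Sigma + (1-\chi)\rho_{\mathrm{ball}}$ of strongly convex defining functions picks up cross terms in $\nabla\chi$, so one must arrange the two pieces to be $\Cc^2$-close in the transition annulus (after rescaling) to preserve positive definiteness of the Hessian while ensuring $\Phi(\Omega)$ never escapes $D$ away from $p$. Controlling this interpolation is the principal obstacle, and here I would invoke the constructions of \cite{F1976} and \cite{DFW2014}.
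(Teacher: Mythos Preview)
The paper does not prove this theorem; it is quoted as \cite[Theorem 2.6]{BFW2019}, with a pointer to the underlying constructions in \cite{DFW2014} and \cite{F1976}. So there is no ``paper's own proof'' to compare against beyond that citation.

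Your outline is a faithful sketch of how the cited result is actually established: local convexification via Narasimhan's lemma, globalization by matching the $2$-jet with a polynomial automorphism of $\Cb^d$, and then capping off with a strongly convex $D$. You correctly identify that strong convexity at $\Phi(p)$ depends only on the $2$-jet of $\Phi$ and the $2$-jet of the defining function, so jet-matching suffices to transport convexity from the local model to the global map. You also correctly flag the two genuine difficulties and defer them to \cite{F1976,DFW2014}, which is exactly what the paper does.

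One small imprecision: invoking ``Anders\'en--Lempert theory'' for the jet-realization step is slightly off-target. Anders\'en--Lempert is about density of compositions of shears and overshears in $\Aut(\Cb^d)$ in the compact-open topology; what you need here is the more elementary (and sharper) statement that any prescribed $k$-jet of a local biholomorphism at a point can be realized \emph{exactly} by an element of $\Aut(\Cb^d)$ when $d\geq 2$. This is a jet-interpolation result (due to Forstneri\v{c}) and can indeed be proved by explicit finite compositions of shears, but it is not an approximation statement. Your construction with unipotent shears is on the right track; just be aware that realizing terms like $z_1^2\,\partial_{z_1}$ requires composing shears in different directions and then cancelling the unwanted cross-terms, so ``finitely many shears with prescribed quadratic part'' hides a short computation.
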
 

We also recall the following well known properties of complex geodesics in strongly convex domains. 

\begin{theorem}\label{thm:cplx geod basic properties} Suppose that $D \subset \Cb^d$ is a bounded strongly convex domain with $\Cc^2$ boundary.  
\begin{enumerate}
\item If $\varphi: \Db \rightarrow D$  is a complex geodesic, then there exists a holomorphic map $\rho : D \rightarrow \Db$ with $\rho \circ \varphi = \id_{\Db}$. 
\item For any $z \in D$ and non-zero $v \in \Cb^d$, there exists a unique complex geodesic $\varphi_{z,v} : \Db \rightarrow D$ with $\varphi_{z,v}(0) = z$ and $\varphi_{z,v}^\prime(0)\lambda = v$ for some $\lambda > 0$. 
\item Each complex geodesic $\varphi: \Db \rightarrow D$ extends to a continuous map $\overline{\varphi} : \overline{\Db} \rightarrow \overline{D}$. 
\item If $z_n \rightarrow z$ in $D$ and $v_n \rightarrow v$ in $\Cb^d\setminus\{0\}$, then $\overline{\varphi}_{z_n, v_n}$ converges uniformly to $\overline{\varphi}_{z,v}$ on $\overline{\Db}$.  
\end{enumerate} 
\end{theorem}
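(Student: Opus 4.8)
The four statements constitute the foundational results of Lempert's theory of complex geodesics in strongly convex domains \cite{L1981}, and the plan is to reduce each to the two deep inputs of that theory: the coincidence $C_D = K_D$ of the Carath\'eodory and Kobayashi distances, and the existence, uniqueness, and boundary regularity of \emph{stationary disks}. I will treat the parts in the order (2), (1), (3), (4), since (1) and (4) turn out to be formal consequences of the rest.

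For part (2), existence comes from a normal families argument applied to the extremal problem defining the infinitesimal Kobayashi metric: among all holomorphic $\varphi : \Db \to D$ with $\varphi(0) = z$ and $\varphi'(0) = \lambda v$ for $\lambda > 0$, one maximizes $\lambda$. Since $D$ is bounded, Montel's theorem produces a maximizer $\varphi_{z,v}$, and a standard argument (using that the extremal value realizes $k_D(z,v)$ and that $k_D$ integrates to $K_D$ along the disk) shows the maximizer is distance preserving, hence a complex geodesic. Uniqueness is the hard part and is where strong convexity is indispensable: following Lempert one characterizes complex geodesics as \emph{stationary disks}, that is, disks admitting a holomorphic lift to the cotangent bundle whose boundary values lie in the conormal bundle of $\partial D$, and solves the associated Riemann--Hilbert boundary value problem. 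Strong convexity forces this boundary problem to have exactly one solution with the prescribed $\varphi(0) = z$ and direction $v$, which gives uniqueness.

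Part (1) follows from the identity $C_D = K_D$. Given a complex geodesic $\varphi$, fix $t \in \Db$ with $t \ne 0$ and choose a Carath\'eodory extremal map $f : D \to \Db$ realizing $C_D(\varphi(0), \varphi(t)) = K_D(\varphi(0), \varphi(t)) = K_{\Db}(0, t)$; such an $f$ exists by normal families since $D$ is bounded. Then $f \circ \varphi : \Db \to \Db$ preserves the hyperbolic distance between the distinct points $0$ and $t$, so by the Schwarz--Pick lemma it is an automorphism $\psi$ of $\Db$. Setting $\rho := \psi^{-1} \circ f$ gives a holomorphic map $D \to \Db$ with $\rho \circ \varphi = \id_{\Db}$.

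For part (3), the continuous (in fact smooth) extension to $\overline{\Db}$ is a consequence of the regularity theory for the Riemann--Hilbert problem attached to stationary disks: when $\partial D$ is $\Cc^2$ and strongly convex, the boundary condition is regular enough that solutions extend continuously up to $\partial \Db$ with image in $\overline{D}$. Finally part (4) is then formal. Given $z_n \to z$ and $v_n \to v \ne 0$, the family $\{\varphi_{z_n, v_n}\}$ is normal, and the uniform boundary estimates underlying (3) make the extensions $\overline{\varphi}_{z_n, v_n}$ equicontinuous on $\overline{\Db}$; hence every subsequential limit is a complex geodesic through $z$ in direction $v$, which by the uniqueness in (2) must be $\varphi_{z,v}$, forcing uniform convergence on $\overline{\Db}$. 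The main obstacle throughout is the uniqueness and boundary regularity of stationary disks, which is precisely the technical heart of Lempert's work and cannot be replaced by soft arguments.
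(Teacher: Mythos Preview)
Your proposal is correct in outline and leans on the same deep input (Lempert's theory) that the paper cites for parts (1) and (2). The main divergence is in part (3): you invoke the regularity theory for the Riemann--Hilbert problem attached to stationary disks, whereas the paper takes a more elementary route via a Hardy--Littlewood lemma, obtaining an explicit H\"older estimate
\[
\norm{\varphi(z)-\varphi(w)} \leq C \norm{z-w}^{1/2}
\]
with $C$ depending only on $D$ and $\delta_D(\varphi(0))$. This difference propagates into part (4): the paper's uniform H\"older bound immediately gives equicontinuity of the family $\{\overline{\varphi}_{z_n,v_n}\}$ on $\overline{\Db}$ (since $\delta_D(z_n)$ stays bounded away from zero), and then uniqueness from (2) upgrades local uniform convergence to uniform convergence on $\overline{\Db}$. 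Your argument for (4) asserts the same equicontinuity but sources it from ``uniform boundary estimates underlying (3)'' without saying why the Riemann--Hilbert estimates are uniform in the base point; this is true in the end but not automatic, and at the $\Cc^2$ regularity assumed here the Riemann--Hilbert route is more delicate than at the higher regularity where Lempert originally worked. The Hardy--Littlewood approach buys you both a cleaner proof of (3) and the uniformity needed for (4) at $\Cc^2$ in one stroke.
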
 

\begin{proof} Part (1) is a deep result of Lempert~\cite{L1981}, for an exposition see~\cite[Section 2.6]{A1989}. 

For a proof of part (2), see~\cite[Corollary 2.6.30]{A1989}. 

Part (3) can be deduced in several ways. For instance, using properties of the Kobayashi metric it is possible to use a lemma of Hardy-Littlewood to prove that any complex geodesic satisfies an estimate of the form
\begin{equation}
\label{eqn:HL_lemma}
\norm{\varphi(z)-\varphi(w)} \leq C \norm{z-w}^{1/2}
\end{equation}
where $C$ is a constant which only depends on the domain $D$ and $\delta_D(\varphi(0))$. This immediately implies that a complex geodesic extends continuously (for a nice exposition of this approach see~\cite{M1993}, also see the last remark in ~\cite{CCS1999} where the exponent $1/2$ is replaced by $\alpha$ for any $0 < \alpha < 1$). It is also possible to use the fact that the Kobayashi distance is Gromov hyperbolic and the identity map $D \rightarrow D$ extends to a homeomorphism of the Gromov boundary and Euclidean boundary~\cite{BB2000}. Then part (3) immediately follows from the definition of the Gromov boundary. 

Finally, in the context of part (4), the uniqueness in part (2) implies that $\varphi_{z_n,v_n}$ converges locally uniformly to $\varphi_{z, v}$ on $\Db$. Then the uniform estimate in Equation~\eqref{eqn:HL_lemma} implies that $\overline{\varphi}_{z_n, v_n}$ converges uniformly to $\overline{\varphi}_{z,v}$ on $\overline{\Db}$.  

\end{proof}

Using a rescaling argument, we will prove the following fact about complex geodesics in strongly convex domains.

\begin{lemma}\label{lem:tangential geodesics} Suppose that $D \subset \Cb^d$ is a bounded strongly convex domain with $\Cc^2$ boundary.  For any $q \in \partial D$ and any neighborhood $V$ of $q$ in $\Cb^d$, there is some $\delta > 0$ with the following property: if $z \in D$, $\norm{z-q} < \delta$, and $\varphi : \Db \rightarrow D$ is a complex geodesic with $\varphi(0) = z$ and $P_z^{\bot}(\varphi^\prime(0)) = 0$, then $\varphi(\Db) \subset V$. 
\end{lemma}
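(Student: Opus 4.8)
The plan is to argue by contradiction via a Pinchuk-type rescaling centered at $q$. Suppose the conclusion fails: there are points $z_n \in D$ with $z_n \to q$, complex geodesics $\varphi_n \colon \Db \to D$ with $\varphi_n(0) = z_n$ and $\varphi_n'(0) \in T^\Cb_{\pi(z_n)}\partial D$ (which is exactly what $P_{z_n}^{\bot}(\varphi_n'(0)) = 0$ means), and points $w_n \in \Db$ with $\varphi_n(w_n) \notin V$. After a complex-affine change of coordinates I may assume $q = 0$, $D \subset \{ \Imaginary z_d > 0\}$, and $T^\Cb_0\partial D = \Cb^{d-1}\times\{0\}$. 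Set $p_n = \pi(z_n)$ (defined for large $n$) and $\epsilon_n = \delta_D(z_n) = \norm{z_n - p_n} \to 0$. Choose complex-affine automorphisms $A_n$ of $\Cb^d$ converging to the identity (possible since $p_n \to 0$) which put $\partial D$ in the above standard position at $p_n$, so in particular $dA_n$ carries $T^\Cb_{p_n}\partial D$ onto $\Cb^{d-1}\times\{0\}$ and $A_n(z_n)$ lies on the inner normal at $0$; then apply the anisotropic dilation $\Lambda_\epsilon(z', z_d) = (z'/\sqrt{\epsilon}, z_d/\epsilon)$. Writing $T_n = \Lambda_{\epsilon_n}\circ A_n$, standard facts about the scaling of strongly convex domains give that $D_n := T_n(D)$ converges, locally uniformly, to the model domain $M = \{(z',z_d) : \Imaginary z_d > \norm{z'}^2\}$ and that $T_n(z_n) \to (0,i)$. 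Recall that $M$ is biholomorphic to $\Bb$ via the Cayley map $\Phi(\zeta',\zeta_d) = \big(\tfrac{\zeta'}{1+\zeta_d}, i\tfrac{1-\zeta_d}{1+\zeta_d}\big)$, with $\Phi(0) = (0,i)$ and $d\Phi_0(\eta',\eta_d) = (\eta', -2i\eta_d)$.

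Next I push the complex geodesics through the scaling. Each $T_n$ is a biholomorphism, so $\psi_n := T_n\circ\varphi_n \colon \Db \to D_n$ is a complex geodesic of $D_n$ with $\psi_n(0) = T_n(z_n) \to (0,i)$; since $dA_n$ sends $T^\Cb_{p_n}\partial D$ to $\Cb^{d-1}\times\{0\}$ and $\Lambda_{\epsilon_n}$ preserves $\Cb^{d-1}\times\{0\}$, we have $\psi_n'(0) \in \Cb^{d-1}\times\{0\}$ for all $n$. Because the Kobayashi distances $K_{D_n}$ converge locally uniformly to $K_M$ and Kobayashi balls of bounded radius in $M$ are bounded in $\Cb^d$ (as $M$ is biholomorphic to the bounded domain $\Bb$), the balls $B_{K_{D_n}}(\psi_n(0), R)$ are uniformly bounded; since $\psi_n$ is a complex geodesic, it maps $\{\abs{w} \le r\}$ into $B_{K_{D_n}}(\psi_n(0), \tanh^{-1} r)$, so $\{\psi_n\}$ is a normal family. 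Passing to a subsequence, $\psi_n \to \psi_\infty$ locally uniformly on $\Db$, and $\psi_\infty \colon \Db \to M$ is a complex geodesic of $M$ with $\psi_\infty(0) = (0,i)$ and $\psi_\infty'(0) \in \Cb^{d-1}\times\{0\}$; it is nonzero, since $k_M((0,i);\psi_\infty'(0)) = \lim_n k_{D_n}(\psi_n(0);\psi_n'(0)) = k_{\Db}(0;1) \neq 0$.

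I then identify $\psi_\infty$. Transporting through $\Phi$, the complex geodesics of $M$ through $(0,i)$ are exactly the $\Phi$-images of the Euclidean diameters of $\Bb$; since $d\Phi_0(\eta',\eta_d) = (\eta', -2i\eta_d)$, the condition $\psi_\infty'(0) \in \Cb^{d-1}\times\{0\}$ forces the relevant diameter to lie in $\Cb^{d-1}\times\{0\}$, whence $\psi_\infty(w) = (w\,\xi', i)$ for a unit vector $\xi' \in \Cb^{d-1}$. Thus $\overline{\psi_\infty}(\overline\Db) = \{(\mu\,\xi', i) : \abs{\mu} \le 1\}$ is a \emph{compact} subset of $\overline M$. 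This is the point where the tangency hypothesis is essential: for a non-tangential limiting direction the corresponding diameter of $\Bb$ would have an endpoint at the point of $\partial\Bb$ where $\Phi$ fails to extend, and $\psi_\infty$ would run off to infinity in $M$. To conclude, I upgrade $\psi_n \to \psi_\infty$ to uniform convergence on $\overline\Db$: the boundary extensions $\overline\psi_n$ are equicontinuous because the Hardy--Littlewood/H\"older estimate \eqref{eqn:HL_lemma} holds for the $\psi_n$ with a constant independent of $n$ (the $D_n$ have uniformly controlled geometry near $0$ and $\delta_{D_n}(\psi_n(0))$ is bounded below). Granting this, $\psi_n(\overline\Db) \subset \Kc$ for large $n$ and a fixed compact $\Kc \subset \Cb^d$, so $\varphi_n(\overline\Db) \subset T_n^{-1}(\Kc)$; but $T_n^{-1} = A_n^{-1}\circ\Lambda_{\epsilon_n}^{-1}$ contracts every direction by a factor at least $\sqrt{\epsilon_n}$ while $A_n^{-1} \to \id$, so $\diam T_n^{-1}(\Kc) \lesssim \sqrt{\epsilon_n} \to 0$; since $z_n \in T_n^{-1}(\Kc)$ with $z_n \to q$, we get $\varphi_n(\overline\Db) \to \{q\}$ in the Hausdorff metric. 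For large $n$ this contradicts $\varphi_n(w_n) \notin V$.

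The main obstacle is the uniform boundary estimate in the last step, i.e.\ controlling the complex geodesics $\psi_n$ up to the boundary of the varying domains $D_n$. An alternative that stays in the fixed domain $D$: use visibility of the Kobayashi metric on strongly convex domains so that the geodesic ray $t \mapsto \varphi_n(t)$ joining $z_n \to q$ to one of its boundary points passes through a fixed compact subset of $D$; reparametrize $\varphi_n$ by a M\"obius transformation to be based there; apply the fixed-domain estimate \eqref{eqn:HL_lemma} to obtain equicontinuity up to $\overline\Db$; extract a limiting complex geodesic of $D$ that meets $\partial D$ at $q$; carry the tangency condition to the limit using Lempert's boundary regularity of complex geodesics; and derive a contradiction with the Hopf lemma, since the image of a complex geodesic of a strongly convex domain cannot touch $\partial D$ in a complex-tangential (indeed, not even a real-tangential) direction. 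Either way, some uniform control of complex geodesics near the boundary is the heart of the matter.
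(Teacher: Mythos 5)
Your argument follows the paper's own strategy almost exactly up to the identification of the limit: same contradiction setup, same anisotropic (Pinchuk-type) rescaling $\Lambda_{\epsilon_n}\circ A_n$ adapted to $\pi(z_n)$, same local Hausdorff convergence of the rescaled domains to the Siegel model, same extraction of a limiting complex geodesic based at the interior point with tangential derivative, and the same key observation that this limit geodesic has \emph{bounded} image because it parametrizes an affine disk in the paraboloid model. Up to this point the proposal is sound (modulo the standard but nontrivial fact, which the paper outsources to Zimmer's results, that a locally uniform limit of complex geodesics of the converging domains is a complex geodesic of the limit domain).

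The genuine gap is in your final step. You need to turn $\varphi_n(w_n)\notin V$, i.e.\ $\psi_n(\lambda_n)\to\infty$ after rescaling, into a contradiction with the boundedness of $\psi_\infty(\Db)$, and for this you invoke a H\"older estimate of the form \eqref{eqn:HL_lemma} for the geodesics $\psi_n$ of the \emph{rescaled} domains $D_n$ ``with a constant independent of $n$.'' That estimate cannot be justified by ``uniformly controlled geometry near $0$'': the constant in \eqref{eqn:HL_lemma} depends on global data of the domain (in particular its diameter), and $\diam D_n \approx \delta_n^{-1}\diam D \to \infty$, with the $D_n$ converging only locally to the unbounded model. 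Worse, the uniform estimate is logically equivalent to what you are trying to prove: it would give $\sup_n\sup_{\overline{\Db}}\norm{\psi_n-\psi_n(0)}<\infty$, which is exactly the uniform boundedness that the contradiction hypothesis ($\psi_n(\lambda_n)\to\infty$) denies. So asserting it begs the question; you correctly flag this as ``the main obstacle,'' but neither of your two suggested repairs is carried out, and the second (limiting geodesic touching $\partial D$ tangentially, Hopf lemma) would additionally require $\Cc^1$ boundary regularity of complex geodesics that is not available for $\Cc^2$ boundaries. The paper closes this gap without any boundary estimate: it restricts $\phi_n$ to the real geodesic ray $\sigma_n(t)=\phi_n(\tanh(t)\lambda_n/\abs{\lambda_n})$ through $0$ and $\lambda_n$, passes to the locally uniform limit $\sigma(t)=\phi(\tanh(t)e^{i\theta})$, and invokes a visibility-type result for the converging domains (Proposition 7.9 and Example 9.4 of \cite{Z2016}) which forces $\sigma(t)\to\infty$ because $\sigma_n(\tanh^{-1}\abs{\lambda_n})\to\infty$; this contradicts $\sigma(\Rb)\subset\phi(\Db)$ bounded. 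Some input of this kind, controlling how geodesics of the rescaled domains can escape to infinity, is unavoidable, and it is the one ingredient your write-up does not supply.
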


Delaying the proof of Lemma~\ref{lem:tangential geodesics} for a moment, we prove Theorem~\ref{thm:tangential geodesics}.

\begin{proof}[Proof of Theorem~\ref{thm:tangential geodesics}] It suffices to fix $p \in \partial \Omega$ and find a neighborhood $\Oc_p$ of $p$ in $\Cb^d$ such that: if $z \in \Omega \cap \Oc_p$, then $E_\Omega(z)$ has non-empty interior. 

Fix $p \in \partial \Omega$ and let $D$, $\Phi$, and $V$ be given by Theorem~\ref{thm:embedding theorem}. Then let $\delta > 0$ be the constant from Lemma~\ref{lem:tangential geodesics} associated to $V$ and $\Phi(p) \in \partial D$. Let $\Oc_p$ be a neighborhood of $p$ in $\Cb^d$ such that 
$$
\norm{\Phi(z) - \Phi(p)} < \delta
$$
for all $z \in \Oc_p \cap \overline{\Omega}$. 

Now fix $z \in \Oc_p \cap \Omega$ and let $\hat{z} := \Phi(z)$. Let $\hat{E} \subset \Cb^d$ denote the set of non-zero vectors $v \in \Cb^d$ where the complex geodesic $\varphi_{\hat{z},v}: \Db \rightarrow D$ given in Theorem~\ref{thm:cplx geod basic properties} part (3) satisfies 
$$
\varphi_{\hat{z},v}(\Db) \subset V.
$$
Notice that Lemma~\ref{lem:tangential geodesics} implies that $\hat{E}$ is non-empty and Theorem~\ref{thm:cplx geod basic properties} part (4) implies that $\hat{E}$ is open. So it suffices to show that $d(\Phi)_z^{-1}\hat{E} \subset E_\Omega(z)$

Fix $v \in \hat{E}$. Then $\varphi_{\hat{z},v}(\Db) \subset V \cap D \subset \Phi(\Omega)$ and hence $\varphi:=\Phi^{-1} \circ \varphi_{\hat{z},v}$ is a well defined holomorphic map into $\Omega$. Further, if $\hat{\rho} : D \rightarrow \Db$ is holomorphic and $\hat{\rho} \circ \varphi_{\hat{z},v} = \id_{\Db}$, then $\rho = \hat{\rho} \circ \Phi : \Omega \rightarrow \Db$ is holomorphic and $\rho \circ \varphi = \id_{\Db}$. Then $\varphi$ is a complex geodesic. Finally, since 
$$
d(\Phi)_z^{-1}v = d(\Phi)_z^{-1} \varphi_{\hat{z},v}^\prime(0) \lambda
$$
for some $\lambda > 0$, we have $d(\Phi)_z^{-1}v \in E_\Omega(z)$. So $d(\Phi)_z^{-1}\hat{E} \subset E_\Omega(z)$.

\end{proof}

\subsection{Proof of Lemma~\ref{lem:tangential geodesics}} The proof of Lemma~\ref{lem:tangential geodesics} is a simple rescaling argument. 

Fix $q \in \partial D$ and a neighborhood $V$ of $q$. Suppose, to obtain a contradiction, that for every $n \in \Nb$ there exist $w_n \in D$ and a complex geodesic $\varphi_n : \Db \rightarrow D$ with $\norm{w_n-q}< 1/n$, $\varphi_n(0)=w_n$, $P^{\bot}_{w_n}(\varphi_n^\prime(0)) = 0$, and $\varphi_n(\Db) \not\subset V$. 

Let $T_n : \Cb^d \rightarrow \Cb^d$ be the translation $T_n(z) = z-\pi(w_n)$. Then, since $D$ is strongly convex, there exist $C,\epsilon > 0$ and for every $n \in \Nb$ a unitary matrix $U_n$ such that 
$$
\Bb(0,\epsilon) \cap U_nT_n(D) = \left\{ (z_1,\dots,z_d) \in \Bb(0,\epsilon) : {\rm Im}(z_1) > {\rm Re}(z_1)^2 + \sum_{j=2}^d \abs{z_j}^2 + R_n(z) \right\} 
$$
where $R_n : \Bb(0,\epsilon) \rightarrow \Rb$ satisfies
\begin{equation}
\label{eqn:error term}
\lim_{z \rightarrow 0} \sup_{n \geq 1} \frac{\abs{R_n(z)}}{\norm{z}^2}=0. 
\end{equation}

Next let $\delta_n := \norm{w_n-\pi(w_n)}$ and let $\Lambda_n$ be the diagonal matrix
$$
\Lambda_n = \begin{pmatrix} \delta_n^{-1} & & & \\ & \delta_n^{-1/2} & & \\ & & \ddots & \\ & & & \delta_n^{-1/2} \end{pmatrix}.
$$
Notice that $\Lambda_nU_nT_n(w_n) = ( i,0,\dots,0)$ and 
$$
\Bb(0,\delta_n^{-1/2}\epsilon) \cap \Lambda_n U_nT_n(D)  = \left\{ (z_1,\dots,z_d) \in \Bb(0,\delta_n^{-1/2}\epsilon) : {\rm Im}(z_1) > \delta_n{\rm Re}(z_1)^2 + \sum_{j=2}^d \abs{z_j}^2 + \hat{R}_n(z) \right\} 
$$
where $\hat{R}_n(z) = \delta_n^{-1} R_n(\Lambda_n^{-1}(z))$. Notice that $\norm{\Lambda_n^{-1}(z)} \leq \delta_n^{1/2} \norm{z}$ for every $z \in \Cb^d$. Then Equation~\eqref{eqn:error term} implies that for every $z \in \Cb^d$
$$
\lim_{n \rightarrow \infty} \abs{\hat{R}_n(z)} = \lim_{n \rightarrow 0} \frac{\abs{R_n(\Lambda_n^{-1}(z))}}{\delta_n} \leq \lim_{n \rightarrow 0} \frac{\abs{R_n(\Lambda_n^{-1}(z))}}{\norm{\Lambda_n^{-1}(z)}^2}\norm{z}^2 = 0
$$
and the convergence is locally uniform in $\Cb^d$. So the sequence of domains $D_n:=\Lambda_n U_n T_n(D)$ converges in the local Hausdorff topology to 
$$
\Pc =\left\{ (z_1,\dots,z_d) \in \Cb^d: {\rm Im}(z_1) >  \sum_{j=2}^d \abs{z_j}^2  \right\}.
$$

Notice that $\phi_n := \Lambda_n U_nT_n\varphi_n : \Db \rightarrow D_n$ is a complex geodesic with $\phi_n(0)=(i,0,\dots,0)$. So by Theorem 4.1 and Proposition 4.2 in~\cite{Z2016}, we may pass to a subsequence such that $\phi_n$ converges locally uniformly to a complex geodesic $\phi : \Db \rightarrow \Pc$. By construction $\phi_n^\prime(0) \in \{0\} \times \Cb^{d-1}$ for all $n$ and so $\phi^\prime(0) \in\{0\} \times \Cb^{d-1}$. It is well known that complex geodesics in $\Pc$ parametrize complex affine lines intersected with $\Pc$ and so 
$$
\phi(\Db) \subset (\{i\} \times \Cb^{d-1}) \cap \Pc = \left\{ (i, z_2, \dots, z_d) : \sum_{j=2}^d \abs{z_j} < 1\right\}
$$
and in particular $\phi(\Db)$ is bounded. 

By assumption, there exists $\lambda_n \in \Db$ such that $\varphi_n(\lambda_n) \notin V$. Then 
$$
\lim_{n \rightarrow \infty} \phi_n(\lambda_n) = \lim_{n \rightarrow \infty} \Lambda_n U_nT_n(\varphi_n(\lambda_n))= \infty
$$
in the one point compactification $\Cb^d \cup \{\infty\}$.  Consider the real geodesic $\sigma_n : \Rb \rightarrow D_n$ defined by 
$$
\sigma_n(t) = \phi_n\left( \tanh(t) \frac{\lambda_n}{\abs{\lambda_n}} \right). 
$$
Passing to another subsequence we can suppose that $\frac{\lambda_n}{\abs{\lambda_n}}  \rightarrow e^{i\theta} \in \mathbb{S}^1$. Then $\sigma_n$ converges to a real geodesic $\sigma : \Rb \rightarrow \Pc$ given by $\sigma(t) = \phi(\tanh(t) e^{i\theta})$. Then by Proposition 7.9 and Example 9.4 in~\cite{Z2016} 
$$
\lim_{t \rightarrow \infty} \sigma(t) = \lim_{n \rightarrow \infty} \sigma_n(\tanh^{-1}(\abs{\lambda_n})) = \infty
$$
in $\Cb^d \cup \{\infty\}$. This is a contradiction since $\sigma(\Rb) \subset \phi(\Db)$ and $\phi(\Db)$ is bounded. 

\section{Proof of Theorem~\ref{thm:spcv}}

Suppose that $\Omega \subset \Cb^d$ is a bounded strongly pseudoconvex domain with $\Cc^2$ boundary. If $d =1$, then the three conditions in Theorem~\ref{thm:spcv} are always true by the uniformization theorem. So suppose that $d \geq 2$. 

It is well known  that the Kobayashi metric on the unit ball is K\"ahler. Further, holomorphic covering maps are local isometries for the Kobayashi metric. Hence we see that (3) $\Rightarrow$ (2). Also, (2) $\Rightarrow$ (1) by definition. 

The rest of the section is devoted to the proof of (1) $\Rightarrow$ (3). So suppose that the Kobayashi metric $k_\Omega$ is a K\"ahler metric. Motivated by the work of Wong~\cite{W1977b} (see~\cite{S1983} for some corrections), we will show that the metric has constant holomorphic sectional curvature near the boundary. Then Corollary~\ref{cor:universal_cover_is_a_ball} will imply that the universal cover of $\Omega$ is biholomorphic to the unit ball $\Bb \subset \Cb^d$. 

Given $z \in \Omega$, let $E_\Omega(z) \subset \Cb^d$ be as in Theorem~\ref{thm:tangential geodesics}. Also, given $v \in T_z \Omega \simeq \Cb^d$ non-zero, let $H(z;v)$ denote the holomorphic sectional curvature at $v$. 

\begin{lemma}  If $z \in \Omega$ and $v \in E_\Omega(z)$, then $H(z;v) = -4$. \end{lemma}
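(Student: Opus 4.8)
The plan is to exploit the defining property of $E_\Omega(z)$: for $v \in E_\Omega(z)$ there is a complex geodesic $\varphi : \Db \to \Omega$ with $\varphi(0) = z$, $\varphi'(0)\lambda = v$ for some $\lambda \in \Cb^*$, and a holomorphic retraction $\rho : \Omega \to \Db$ with $\rho \circ \varphi = \id_{\Db}$. First I would normalize so that $\lambda = 1$, i.e.\ $\varphi'(0) = v$ (the holomorphic sectional curvature $H(z;v)$ depends only on the complex line $\Cb v$, so this is harmless). The retraction $\rho$ together with $\varphi$ makes $\varphi(\Db)$ a holomorphic retract of $\Omega$, and the standard functoriality of the Kobayashi metric under holomorphic maps shows that $\varphi : \Db \to \Omega$ is an isometric embedding for the Kobayashi metrics: since $\rho \circ \varphi = \id$, we get $k_\Db = k_\Db \circ d(\rho\circ\varphi) \le k_\Omega \circ d\varphi \le k_\Db$, so in fact $\varphi^* k_\Omega = k_\Db$ as infinitesimal metrics on $\Db$ (with our normalization $k_\Db$ has constant holomorphic curvature $-4$).

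Next comes the key point, which uses the hypothesis that $k_\Omega$ is a K\"ahler metric. The submanifold $S := \varphi(\Db)$ is a complex curve in $\Omega$, and $\varphi$ is an isometry from $(\Db, k_\Db)$ onto $(S, k_\Omega|_S)$. I claim $S$ is totally geodesic in $(\Omega, k_\Omega)$. This is where $\rho$ is used a second time: the fixed-point set of a holomorphic (hence isometric, for the Kobayashi metric) self-map need not help directly, but the composition $\varphi \circ \rho : \Omega \to \Omega$ is a holomorphic retraction onto $S$, and for a K\"ahler metric the image of a holomorphic retraction is a totally geodesic complex submanifold --- more precisely, $\varphi\circ\rho$ is a distance-nonincreasing idempotent, its image is a complex submanifold, and by a standard argument (e.g.\ Cartan-type fixed point / the fact that a holomorphic isometric retract of a K\"ahler manifold is totally geodesic) $S$ is totally geodesic. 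Given this, the holomorphic sectional curvature of $(\Omega, k_\Omega)$ in the direction $v = \varphi'(0) \in T_zS$ equals the holomorphic sectional curvature of the induced metric on $S$, because for a complex totally geodesic submanifold the ambient holomorphic sectional curvature in a tangent direction agrees with the intrinsic one (the second fundamental form, which is the only correction term in the Gauss equation for the relevant curvature component, vanishes). Since $(S, k_\Omega|_S) \cong (\Db, k_\Db)$ has constant holomorphic sectional curvature $-4$, we conclude $H(z;v) = -4$.

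The main obstacle is the totally-geodesic claim, and I would want to be careful about exactly which fact is invoked. The cleanest route: a holomorphic isometric embedding $\varphi : (\Db, k_\Db) \hookrightarrow (\Omega, k_\Omega)$ that admits a holomorphic left inverse $\rho$ is \emph{metrically} a retract, so $S = \varphi(\Db)$ is a metric retract of the Riemannian manifold $(\Omega, k_\Omega)$ via the $1$-Lipschitz map $\varphi\circ\rho$; a classical fact (for complete, or even just for smooth Hermitian, metrics) is that a submanifold which is the image of a distance-nonincreasing smooth retraction of a K\"ahler manifold and is itself a complex submanifold is totally geodesic --- alternatively one argues that along $S$ the Kobayashi metric restricted to $S$ and the ambient metric have the same Levi-Civita connection because $d(\varphi\circ\rho)$ projects the ambient connection onto $S$. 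A subtlety is that $k_\Omega$ is a priori only a smooth Hermitian metric on $\Omega$; the K\"ahler hypothesis is precisely what lets the retraction argument (or the Gauss equation bookkeeping) go through, and I would state the totally-geodesic lemma for smooth K\"ahler metrics and verify the hypotheses (complex submanifold, smooth retraction) directly from Theorem~\ref{thm:cplx geod basic properties} and the definition of $E_\Omega(z)$. The remaining computation --- that the holomorphic sectional curvature of a complex totally geodesic curve agrees with that of the ambient metric in its tangent direction --- is a one-line consequence of the Gauss equation and I would not belabor it.
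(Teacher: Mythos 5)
Your proposal is correct and follows essentially the same route as the paper: both use $\rho \circ \varphi = \id_{\Db}$ to sandwich the Kobayashi metric and conclude that $\varphi$ is a holomorphic isometric embedding of the Poincar\'e disk, from which $H(z;v)=-4$ follows. The paper compresses that last step into a single ``hence,'' whereas you justify it explicitly via the $1$-Lipschitz retraction $\varphi\circ\rho$ (whose image is totally geodesic) and the Gauss equation; that justification is sound, provided you keep the distance-nonincreasing hypothesis in the totally-geodesic lemma (holomorphicity of the retraction alone would not suffice for an arbitrary K\"ahler metric, but here it is automatic since holomorphic self-maps contract $k_\Omega$).
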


\begin{proof} By definition there exist a complex geodesic $\varphi : \Db \rightarrow \Omega$ and a holomorphic map $\rho : \Omega \rightarrow \Db$ where $\varphi(0)=z$, $\varphi^\prime(0)\lambda = v$ for some non-zero $\lambda \in \Cb$, and $\rho \circ \varphi = \id_{\Db}$. 

By the monotonicity property of the Kobayashi distance, 
\begin{align*}
K_\Omega(\varphi(\zeta),\varphi(\eta)) & \leq K_{\Db}(\zeta,\eta)=K_{\Db}((\rho\circ \varphi)(\zeta),(\rho \circ \varphi)(\eta))  \leq K_\Omega(\varphi(\zeta),\varphi(\eta)) 
\end{align*}
for every $\zeta, \eta \in \Db$. Thus  $\varphi : (\Db, K_{\Db}) \rightarrow (\Omega, K_\Omega)$ is an isometric embedding and hence
\begin{equation*}
H(z;v) = -4. \qedhere
\end{equation*}
\end{proof} 

\begin{lemma} There exists a compact set $K \subset \Omega$  such that the holomorphic sectional curvature of $k_\Omega$ equals $-4$ on $\Omega \setminus K$. \end{lemma}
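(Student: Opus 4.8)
The plan is to show that the holomorphic sectional curvature is already equal to $-4$ at \emph{every} point of $\Omega$ sufficiently close to $\partial\Omega$. Let $\varepsilon > 0$ be the constant produced by Theorem~\ref{thm:tangential geodesics} and set
$$
K := \{ z \in \Omega : \delta_\Omega(z) \geq \varepsilon \}.
$$
Since $\Omega$ is bounded and $\delta_\Omega$ extends continuously to $\overline{\Omega}$ with $\delta_\Omega \equiv 0$ on $\partial\Omega$, the set $K$ is closed in the compact set $\overline{\Omega}$ and disjoint from $\partial\Omega$; hence $K$ is a compact subset of $\Omega$. It therefore suffices to prove that $H(z;v) = -4$ for every $z \in \Omega$ with $\delta_\Omega(z) < \varepsilon$ and every non-zero $v \in T_z\Omega \simeq \Cb^d$.

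Fix such a $z$. By Theorem~\ref{thm:tangential geodesics} the set $E_\Omega(z)$ has non-empty interior in $\Cb^d$, and by the previous lemma $H(z;v) = -4$ for every $v \in E_\Omega(z)$. Now I would use that, because $k_\Omega$ is a K\"ahler (in particular smooth Hermitian) metric, the function $v \mapsto H(z;v)\, k_\Omega(z;v)^4$ is, up to a fixed multiplicative constant, the value $R_z(v,\bar v, v, \bar v)$ of the curvature tensor of $k_\Omega$ at $z$, and in particular is a real polynomial, of bidegree $(2,2)$ in $(v,\bar v)$, on $\Cb^d$. Consequently
$$
v \longmapsto P_z(v) := \big( H(z;v) + 4 \big)\, k_\Omega(z;v)^4
$$
is again a real polynomial of bidegree $(2,2)$ on $\Cb^d$, and it vanishes identically on the non-empty open set $\inte E_\Omega(z)$. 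Since a polynomial on the connected set $\Cb^d \simeq \Rb^{2d}$ that vanishes on a non-empty open set vanishes identically, we get $P_z \equiv 0$. As $\Omega$ is bounded, $k_\Omega(z;v) > 0$ for every non-zero $v$, and therefore $H(z;v) = -4$ for all non-zero $v \in T_z\Omega$. Since $z$ was an arbitrary point with $\delta_\Omega(z) < \varepsilon$, the holomorphic sectional curvature of $k_\Omega$ equals $-4$ on $\Omega \setminus K$, as desired.

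The argument is essentially routine once Theorem~\ref{thm:tangential geodesics} and the previous lemma are in hand; the one point to treat with care is the passage from ``$H(z;\cdot)$ is constant on an open set of directions'' to ``$H(z;\cdot)$ is constant on all directions at $z$''. This is exactly where the K\"ahler (and smoothness) hypothesis on $k_\Omega$ enters: it lets us identify $H(z;\cdot)$ with a quotient of two explicit bidegree-$(2,2)$ polynomials whose denominator $k_\Omega(z;\cdot)^4$ is non-vanishing away from the origin, after which the real-analyticity of polynomials does the rest. I expect no genuine difficulty beyond recording this identification correctly and verifying the elementary continuity and compactness claims about $K$.
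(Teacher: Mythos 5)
Your proof is correct and follows essentially the same route as the paper: the paper also takes $K = \{z \in \Omega : \delta_\Omega(z) \geq \varepsilon\}$ and deduces $H(z;\cdot)\equiv -4$ from the fact that $v \mapsto H(z;v)$ is $\Rb$-rational and equals $-4$ on the open set of directions supplied by Theorem~\ref{thm:tangential geodesics}. Your only addition is to spell out the rationality explicitly via the identity $H(z;v) = R_z(v,\bar v,v,\bar v)/k_\Omega(z;v)^4$, which the paper leaves implicit.
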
 

\begin{proof} Fix $\varepsilon > 0$ satisfying Theorem~\ref{thm:tangential geodesics}. Then if $z \in \Omega$ and $\delta_\Omega(z)< \varepsilon$, then $H(z;v)$ equals $-4$ on the open set $E_\Omega(z)$. Since the map $v \in T_z \Omega \mapsto H(z;v) \in \Rb$ is $\Rb$-rational ($z$ is fixed), we obtain that $H(z;v) = -4$ for all non-zero $v \in T_z \Omega$. 

So if 
$$
K := \{ z \in \Omega : {\rm dist}_{\rm Euc}(z,\partial \Omega) \geq \epsilon\},
$$
then the holomorphic sectional curvature of $k_\Omega$ equals $-4$ on $ \Omega \setminus K$. \end{proof} 

Finally Corollary~\ref{cor:universal_cover_is_a_ball} implies that the universal cover of $\Omega$ is biholomorphic to the unit ball $\Bb \subset \Cb^d$. This completes the proof of (1) $\Rightarrow$ (3).

\bibliographystyle{plain}
\bibliography{complex_kob}

\end{document}